\documentclass[final,3p,times,11pt]{elsarticle}
\usepackage[USenglish]{babel}
\usepackage{amsmath,amssymb,amsthm, mathrsfs,multirow}
\usepackage{mathtools}
\usepackage{graphicx}
\usepackage{stmaryrd}
\usepackage[dvipsnames]{xcolor}
\usepackage{cancel}
\usepackage{ulem}
\usepackage{tabularx}
\usepackage{comment}
\usepackage[linesnumbered,ruled,vlined]{algorithm2e}
\definecolor{Myblue}{rgb}{.2 0.4 1}

\usepackage{hyperref}
\hypersetup{
    colorlinks = true,       
}

\newtheorem{theorem}{Theorem}

\journal{}
\makeatletter
\def\ps@pprintTitle{%
 \let\@oddhead\@empty
 \let\@evenhead\@empty
 \def\@oddfoot{}%
 \let\@evenfoot\@oddfoot}
\makeatother

\begin{document}
\begin{frontmatter}
\title{Recursive rounding of sample size estimation for multi-fidelity Monte Carlo}

\author[RiceCMOR]{Jiaxing Liang}
\ead{jliang18@terpmail.umd.edu}
\begin{abstract}
In multifidelity Monte Carlo (MFMC), optimal sample allocations are typically derived from a continuous relaxation of a variance minimization problem, with integer solutions obtained through post hoc rounding. Such rounding procedures may fail to fully exploit the available cost budget, particularly under tight cost budgets or when model costs vary significantly. In this work, we reformulate the MFMC allocation problem as a variance-constrained cost minimization problem that is equivalent to the standard budget-constrained formulation at the continuous level. 
This reformulation admits a recursive structure that enables the construction of an integer allocation strategy based on Bellman's principle of optimality. Since the resulting MFMC allocation has a mathematical structure similar to the optimal multilevel Monte Carlo (MLMC) allocation, the proposed strategy naturally extends to MLMC. The resulting algorithm constructs integer-valued sample allocations that more closely follow the continuous variance--cost tradeoff while using the prescribed variance tolerance more efficiently. Numerical experiments demonstrate that the proposed approach  satisfies the prescribed variance tolerance with less computational overhead than standard rounding strategies.
\end{abstract}

\begin{keyword}
Multi-fidelity Monte Carlo \sep Multi-level Monte Carlo \sep Sample Allocation \sep Uncertainty Quantification \sep Variance Reduction.
\MSC[2020] 65C05\sep 62K05 \sep 49M20.
\end{keyword}
\end{frontmatter}

\section{Introduction}\label{sec:Intro}

Monte Carlo (MC) methods provide a standard framework for the statistical characterization of complex systems in science and engineering. Their practical use, however, is often limited by the high computational cost of high-fidelity (HF) model evaluations. Multifidelity Monte Carlo (MFMC) methods \cite{PeWiGu:2016} address this challenge by introducing a hierarchy of low-fidelity (LF) surrogate models that are computationally inexpensive yet sufficiently correlated with the HF quantity of interest. By combining these models appropriately, MFMC constructs an unbiased estimator with substantially reduced variance compared to standard MC sampling.

A central component of MFMC is the determination of optimal sample allocations across the model hierarchy. This problem is commonly formulated in two equivalent ways: the {\it cost-constrained variance minimization problem} and the {\it variance-constrained cost minimization problem}. In the continuous setting, these formulations are analytically equivalent; however, when restricted to integer-valued sample sizes, their behavior differs. The traditional budget-constrained approach \cite{PeWiGu:2016}, which uses flooring for large budgets and hybrid rounding \cite{GrGuJuWa:2023} to avoid inadmissible zero-sample assignments in low-budget regimes, often leads to inflated estimator variance. This occurs because post hoc rounding leaves part of the computational budget unused, an effect that is more pronounced when resources are limited or model costs differ significantly. In contrast, the variance-constrained formulation naturally ensures admissibility by enforcing at least one sample per model, but typically relies on ceiling operations to meet the prescribed error tolerance. As a result, it incurs additional computational cost by oversatisfying the variance constraint. In both cases, the resulting integer allocation deviates from the continuous cost--variance Pareto front and leads to inefficient use of resources.

To address this issue, we propose an iterative redistribution scheme for the variance-constrained cost minimization problem. The method reallocates the remaining variance tolerance across unresolved fidelity levels, thereby reducing the gap between the integer solution and the continuous optimum. In contrast to one-step rounding procedures, this approach redistributes the integer slack and yields allocations that more closely follow the optimal variance--cost tradeoff.

The proposed approach has two key features. First, we reformulate the MFMC allocation problem as a variance-constrained cost minimization problem. This reformulation preserves the Pareto-optimal structure of the equivalent budget-constrained formulation while guaranteeing admissibility by ensuring a nontrivial highest-fidelity sample size, which eliminates the need for heuristic corrections in low-budget regimes. Second, we observe that the resulting allocation has a similar structure as the optimal multilevel Monte Carlo (MLMC) allocation \cite{Gi:2008,Gi:2015}. Exploiting this structural correspondence, we develop a recursive rounding strategy that constructs integer-valued sample allocations by sequentially updating the residual variance budget after each rounding step. The resulting framework applies to both MFMC and MLMC.

The remainder of this paper is organized as follows. In Section~\ref{sec:MFMC}, we review the MFMC framework from a variance-constrained perspective and discuss the governing continuous allocation theory. Section~\ref{sec:Iterative_IntegerValued_Sample_Size} introduces the proposed iterative integer-valued allocation algorithm and establishes its fundamental properties. Section~\ref{sec:Num_Result} provides numerical evidence validating the performance of our approach.

\section{Multi-fidelity Monte Carlo}\label{sec:MFMC}
Let $(\Omega, \mathcal{F}, \mathbb{P})$ be a complete probability space, where $\Omega \subset \mathbb{R}^d$ is the space of elementary events, $\mathcal{F}$ is the associated $\sigma$-algebra, and $\mathbb{P}:\mathcal{F} \to [0,1]$ is a probability measure. Let $(\mathcal{U}, \langle \cdot, \cdot \rangle_{\mathcal{U}})$ be a separable Hilbert space. The model outputs are represented as random variables in the Bochner space
\begin{equation}\label{eq:BochnerL2}
    L^2_{\mathbb{P}}(\Omega; \mathcal{U}) := \left\{ u: \Omega \to \mathcal{U} \;\middle|\; u \text{ is strongly measurable and } \int_\Omega \|u(\boldsymbol{\omega})\|_{\mathcal{U}}^2 \, \mathrm{d}\mathbb{P}(\boldsymbol{\omega}) < \infty \right\}.
\end{equation}
The space is equipped with the inner product $\langle u, v \rangle =  \int_\Omega  \langle  u(\boldsymbol{\omega}) , v(\boldsymbol{\omega})  \rangle_{{\mathcal U}} \; \textup{d}\mathbb{P}(\boldsymbol{\omega})$.

Let $u_1 \in L^2_{\mathbb{P}}(\Omega;\mathcal{U})$ denote the high-fidelity (HF) model, whose expectation $\mathbb{E}[u_1]$ is the quantity of interest. Direct Monte Carlo estimation of this expectation is often computationally expensive. Multifidelity Monte Carlo (MFMC) alleviates this cost by combining the HF model with a hierarchy of increasingly inexpensive low-fidelity (LF) approximations $\{u_k\}_{k=2}^K$. For each model $u_k$, define the standard Monte Carlo estimator
\[
A^{\mathrm{MC}}_{k,N}
:=
\frac{1}{N}\sum_{i=1}^N u_k(\omega^{(i)}),
\]
based on $N$ independent and identically distributed (i.i.d.) samples. The MFMC estimator combines HF and LF estimators through a control-variate construction,
\begin{equation}\label{eq:MFMC_estimator}
    A^{\mathrm{MF}}
    = A^{\mathrm{MC}}_{1,N_1}
    + \sum_{k=2}^K \alpha_k\left( \overline{A}_{k,N_k} - \overline{A}_{k,N_{k-1}} \right),
\end{equation}
where $\alpha_k$ are the control-variate weights and $N_k$ are the corresponding sample sizes. We impose a nested sampling strategy satisfying $N_1 \le N_2 \le \dots \le N_K$. This strategy ensures that the two estimators in each correction term, $\overline{A}_{k,N_k}$ and $\overline{A}_{k,N_{k-1}}$, share common random samples, exploiting the correlation between estimators at the same fidelity level and reducing the variance of the correction term.

The nested sampling strategy couples the correction terms. An equivalent decomposition into uncorrelated blocks is obtained by defining
\begin{equation}\label{eq:MFMC_Yk}
Y_1 := A^{\text{MC}}_{1,N_1},\quad 
Y_k := \left(1-\frac{N_{k-1}}{N_k}\right)\!\left(A^{\text{MC}}_{k,N_k\backslash N_{k-1}} - A^{\text{MC}}_{k,N_{k-1}}\right), \;\; k=2\ldots, K,
\end{equation}
where $A^{\mathrm{MC}}_{k,N_k \setminus N_{k-1}}$ denotes the MC estimator constructed from the additional
$N_k-N_{k-1}$ samples, which are independent of the
$N_{k-1}$ samples used in $A^{\mathrm{MC}}_{k,N_{k-1}}$.
With these definitions, the MFMC estimator can be written as
\begin{equation}\label{eq:MFMC_estimator_independent}
    A^{\mathrm{MF}} = Y_1 + \sum_{k=2}^{K} \alpha_k Y_k
\end{equation}
By construction, $\mathbb{E}[Y_k]=0$ for $k\ge2$, so the estimator remains unbiased. Although the correction terms are constructed from nested samples, the corresponding variables
$Y_2,\ldots,Y_K$
are pairwise uncorrelated, i.e.,
\[
\operatorname{Cov}(Y_k,Y_j)=0,\qquad
2\le k<j\le K.
\]
This property follows from the nested sampling structure: after expanding the covariance between two correction terms, the contributions from disjoint sample increments vanish by independence, while those from the shared samples cancel exactly. Consequently, the only nonzero covariance terms are those between $Y_1$ and $Y_k$, $k\ge2$. Denote the variance of the $k$-th model and the Pearson correlation coefficients between the $k$-th and $j$-th models by
\begin{equation*}
    \sigma_k^2 = \mathbb{V}\!\left[u_k(\boldsymbol{\omega})\right],\qquad 
\rho_{k,j}=\frac{\operatorname{Cov}\!\left[u_k(\boldsymbol{\omega}),u_j(\boldsymbol{\omega})\right]}{\sigma_k\sigma_j}, 
    \quad k,j=1,\dots,K,
\end{equation*}
where the covariance is defined as $\text{Cov}[u_k(\boldsymbol{\omega}),u_j(\boldsymbol{\omega})] := \mathbb{E}[\langle u_k(\boldsymbol{\omega}) - \mathbb{E}[u_k(\boldsymbol{\omega})], u_j(\boldsymbol{\omega}) - \mathbb{E}[u_j(\boldsymbol{\omega})]\rangle_\mathcal{U}]$, and $\rho_{k,k}=1$. It follows from \cite[Lemma~3.2]{PeWiGu:2016} that the variance and covariance of $Y_k$ are 
\begin{equation*}
\label{eq:Var_Cov_Yk}
\mathbb{V}[Y_1] = \frac{\sigma_1^2}{N_1}, \quad
\mathbb{V}[Y_k] = \left(\frac{1}{N_{k-1}} - \frac{1}{N_k}\right)\sigma_k^2,\quad
\operatorname{Cov}(Y_1, Y_k)
= -\left(\frac{1}{N_{k-1}} - \frac{1}{N_k}\right)\rho_{1,k}\sigma_1\sigma_k,\quad k=2,\ldots,K.
\end{equation*}
Consequently, the variance of the MFMC estimator is
\begin{equation}
\label{eq:MFMC_variance}
\mathcal{V}^{\mathrm{MF}} = \frac{\sigma_1^2}{N_1} + \sum_{k=2}^{K} \left( \frac{1}{N_{k-1}} - \frac{1}{N_k} \right) \left( \alpha_k^2 \sigma_k^2 - 2 \alpha_k \rho_{1,k} \sigma_1 \sigma_k \right).
\end{equation}
Let $C_k$ denote the cost of a single evaluation of model $u_k$. The total cost to compute the MFMC estimator is

\[
\mathcal{W}^{\mathrm{MF}} = \sum_{k=1}^K C_k N_k.
\]

\subsection{Continuous MFMC allocation}

The objective is to determine the optimal weights $\alpha_k$ and sample sizes $N_k$. For compactness, let
\[
\boldsymbol{\alpha}=(\alpha_1,\ldots,\alpha_K),
\qquad
\boldsymbol{N}=(N_1,\ldots,N_K),
\]
with $\alpha_1=1$ corresponding to the high-fidelity model. Rather than adopting the budget-constrained formulation of \cite{PeWiGu:2016}, we consider the equivalent formulation of minimizing computational cost subject to a prescribed variance tolerance $\epsilon^2$. The two formulations share the same Pareto-optimal frontier and therefore yield the same continuous optimum. Accordingly, we consider the optimization problem
\begin{equation}\label{eq:Optimization_pb_sample_size}
    \begin{array}{ll}
    \displaystyle\min_{\boldsymbol{\alpha} \in \mathbb{R}^{K}, \boldsymbol{N} \in \mathbb{R}^{K}} &\displaystyle \mathcal{W}^{\text{MF}}(\boldsymbol{N}),\\
       \text{subject to} &\mathcal{V}^{\text{MF}}(\boldsymbol{\alpha},\boldsymbol{N})=\epsilon^2,\\[2pt]
       &\displaystyle N_1\ge 0,\quad \displaystyle N_{k-1}\le N_k, \;\; k=2\ldots,K.
    \end{array}
\end{equation}
The analytical solution is stated in Theorem~\ref{thm:Sample_size_est}, with the derivation deferred to the Appendix.

\begin{theorem}
\label{thm:Sample_size_est}
Consider the optimization problem \eqref{eq:Optimization_pb_sample_size} for
$K$ models $\{u_k\}_{k=1}^K$
with standard deviations $\sigma_k$, correlation coefficients
$\rho_{1,k}$ relative to the high-fidelity model $u_1$, and evaluation costs
$C_k$. Define $\Delta_k = \rho_{1,k}^2 - \rho_{1,k+1}^2$ for $k = 1, \dots, K$ with $\rho_{1,K+1}=0$. Assume the following conditions hold
\begin{alignat*}{5}
&(i)\;\textit{Monotone correlations:} &\quad\quad\quad& |\rho_{1,2}| > \cdots > |\rho_{1,K}|>0,\\
&(ii)\;\textit{Cost-correlation ratio:} &\quad& \frac{\Delta_{k}}{C_k} > \frac{\Delta_{k-1}}{C_{k-1}}, \quad\;\; k=2,\ldots,K.
\end{alignat*}
Then the optimal weights $\alpha_k^*$ and sample sizes $N_k^*$ are 
\begin{equation}\label{eq:MFMC_RealValued_Sample_Size}
    \alpha_k^* = \frac{\rho_{1,k}\sigma_1}{\sigma_k}, \qquad
    \;N_k^*=\frac{\sigma_1^2}{\epsilon^2}\sqrt{\frac{\Delta_k}{C_k}}\sum_{j=1}^K\sqrt{C_j\Delta_{j}}, \quad\;\; k=1,\ldots,K.
\end{equation}
%
%
\end{theorem}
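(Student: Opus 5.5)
We eliminate the weights first, then solve the resulting sample-size problem with a Lagrange multiplier, and finally check the nesting constraints.

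\textbf{Step 1: eliminate the weights.} Fix $\boldsymbol N$ with $N_1\le\dots\le N_K$. The weights enter \eqref{eq:MFMC_variance} only through the terms $(N_{k-1}^{-1}-N_k^{-1})(\alpha_k^2\sigma_k^2-2\alpha_k\rho_{1,k}\sigma_1\sigma_k)$, and each coefficient $N_{k-1}^{-1}-N_k^{-1}$ is nonnegative. So each term is minimized separately by completing the square, which gives $\alpha_k^*=\rho_{1,k}\sigma_1/\sigma_k$. Substituting back and rearranging the telescoping sum gives
\[
\mathcal V^{\rm MF}=\sigma_1^2\sum_{k=1}^K\frac{\Delta_k}{N_k},
\]
using $\rho_{1,1}=1$ and $\rho_{1,K+1}=0$.

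The cost minimization at fixed variance is equivalent to minimizing variance at fixed $\boldsymbol N$. If some $\boldsymbol\alpha$ attains $\epsilon^2$ with a given $\boldsymbol N$, then $\boldsymbol\alpha^*$ attains a variance at most $\epsilon^2$ with the same $\boldsymbol N$. Scaling $\boldsymbol N$ down then restores equality at no larger cost. So we may take $\boldsymbol\alpha=\boldsymbol\alpha^*$ without loss.

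\textbf{Step 2: relaxed sample-size problem.} Drop the ordering constraints and minimize $\sum_k C_kN_k$ subject to $\sigma_1^2\sum_k\Delta_k/N_k=\epsilon^2$ over $N_k>0$. Assumption (i) gives $\Delta_k>0$ for all $k$, so this problem is convex: the objective is linear and the feasible set $\{\sum_k\Delta_k/N_k\le\epsilon^2/\sigma_1^2\}$ is convex. The Lagrange stationarity condition reads
\[
C_k=\lambda\sigma_1^2\Delta_k/N_k^2,\qquad\text{so}\qquad N_k=\sigma_1\sqrt{\lambda\Delta_k/C_k}.
\]
Inserting this into the constraint gives $\sigma_1\sum_j\sqrt{C_j\Delta_j}=\epsilon^2\sqrt\lambda$. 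Solving for $\lambda$ yields exactly the $N_k^*$ in \eqref{eq:MFMC_RealValued_Sample_Size}.

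Alternatively, Cauchy--Schwarz gives the lower bound directly and avoids any KKT discussion:
\[
\Big(\sum_k C_kN_k\Big)\Big(\sum_k\Delta_k/N_k\Big)\ge\Big(\sum_k\sqrt{C_k\Delta_k}\Big)^2 .
\]
Equality holds exactly when $N_k\propto\sqrt{\Delta_k/C_k}$.

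\textbf{Step 3: feasibility of the relaxed optimum.} The relaxed minimizer is $N_k^*\propto\sqrt{\Delta_k/C_k}$, and assumption (ii) says $\Delta_k/C_k$ is strictly increasing in $k$. Hence $N_1^*<N_2^*<\dots<N_K^*$, and $N_1^*>0$. The relaxed minimizer is therefore feasible for \eqref{eq:Optimization_pb_sample_size}, so it is optimal there.

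\textbf{Main obstacle.} The delicate point is the coupling in Step 1. The simplified variance formula relies on the ordering of $\boldsymbol N$, since the coefficients must be nonnegative and the sum must telescope. Yet Step 2 drops that ordering. The argument avoids circularity because Step 2 uses only the simplified formula as an objective, and Step 3 then confirms that the resulting minimizer satisfies the ordering. We must also check that Step 1 rules out a lower cost achieved with non-optimal weights, and that the degenerate case $N_{k-1}=N_k$, where $\alpha_k$ is arbitrary, causes no trouble. Both hold because for every admissible $\boldsymbol N$ the variance is minimized by $\boldsymbol\alpha^*$.
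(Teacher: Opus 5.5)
Your proof is correct, but it takes a different route from the paper.

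The paper proceeds in three stages:
\begin{enumerate}
\item It partitions the index set into contiguous blocks of equal sample sizes and solves a separate KKT system for each partition (convex after $y=1/N$).
\item This gives the block cost $\frac{\sigma_1^2}{\epsilon^2}\bigl(\sum_i\sqrt{\Delta_{\ell_i}\sum_{k\in B_i}C_k}\bigr)^2$.
\item It applies Cauchy--Schwarz inside each block, together with (ii), to show that the singleton partition is strictly cheapest, and only then checks monotonicity.
\end{enumerate}

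You avoid enumerating tie patterns altogether. With $\boldsymbol\alpha^*$ the variance equals $\sigma_1^2\sum_k\Delta_k/N_k$ for every admissible $\boldsymbol N$, ties included. A single global Cauchy--Schwarz inequality then bounds the cost of every feasible pair from below by $\frac{\sigma_1^2}{\epsilon^2}\bigl(\sum_k\sqrt{C_k\Delta_k}\bigr)^2$. You use (ii) only once, to show that the equality case $\boldsymbol N^*$ is ordered.

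Your Cauchy--Schwarz version buys three things:
\begin{itemize}
\item It is shorter.
\item It needs no KKT theory. Your Lagrangian variant tacitly replaces the nonconvex equality constraint by an inequality to obtain convexity; the Cauchy--Schwarz version makes that moot.
\item Its equality case gives uniqueness: $\boldsymbol N=\boldsymbol N^*$, and then $\alpha_k=\alpha_k^*$ is forced because $N^*_{k-1}<N^*_k$.
\end{itemize}
The paper's block analysis, in exchange, gives the optimal cost for every tie pattern. It also makes explicit why merged levels are wasteful, which connects to the model-selection discussion.

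Three small remarks.
\begin{itemize}
\item The identity $\mathcal V^{\mathrm{MF}}(\boldsymbol\alpha^*,\boldsymbol N)=\sigma_1^2\sum_k\Delta_k/N_k$ is purely algebraic and holds for any positive $\boldsymbol N$. Ordering is needed only to make $\boldsymbol\alpha^*$ the minimizer, so the circularity you worry about is milder than you state.
\item Your scaling step is unnecessary. Cauchy--Schwarz combined with $\sum_k\Delta_k/N_k\le\epsilon^2/\sigma_1^2$ gives the bound directly; this is exactly how the paper proves the lower cost bound in Theorem~\ref{thm:MFMC_New_IntegerValued_Variance_Cost}.
\item $\Delta_1>0$ and $N_1^*>0$ require $|\rho_{1,2}|<1$, which (i) does not literally state. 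The paper assumes this tacitly as well.
\end{itemize}
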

\vspace{4mm}
At the continuous optimum \eqref{eq:MFMC_RealValued_Sample_Size}, the total computational cost and the {\it normalized variance} are
\begin{equation}\label{eq:MFMC_sampling_cost}
    \mathcal{W}^\text{MF} = \sum_{k=1}^K C_k N_k^* = \frac{\sigma_1^2}{\epsilon^2}\left(\sum_{k=1}^K\sqrt{C_k\Delta_k}\right)^2,
    \quad \quad
    \mathcal{V}^{\mathrm{MF}}_n :=\frac{\mathcal{V}^{\mathrm{MF}}}{\sigma_1^2} = \sum_{k=1}^K \frac{\Delta_k}{N_k}.
\end{equation}
The normalization yields a dimensionless variance measure, and the prescribed tolerance becomes
$\mathcal{V}^{\mathrm{MF}}_n=\epsilon^2/\sigma_1^2$. The computational efficiency of MFMC relative to standard Monte Carlo is quantified by the cost ratio 
\begin{equation}\label{eq:MFMC_sampling_cost_efficiency}
    \xi = \frac{\mathcal{W}^\mathrm{MF}}{\mathcal{W}^\mathrm{MC}} 
    = \frac{1}{C_1}\left(\sum_{k=1}^K \sqrt{C_k \Delta_k}\right)^2,
\end{equation}
where $\xi<1$ indicates that MFMC achieves the prescribed variance at lower cost than standard Monte Carlo.

Differentiating $\mathcal{V}^{\mathrm{MF}}_n$ and $\mathcal{W}^{\mathrm{MF}}$ with respect to $N_k$ reveals the local variance–cost trade-off underlying the MFMC allocation: increasing $N_k$ reduces variance but increases computational cost. At the continuous optimum, the marginal variance reduction per unit computational cost,  $\frac{\Delta_k}{C_k N_k^2}$, is identical across all active fidelity levels. This characterizes an \textit{equal-marginal-efficiency principle}: every model contributes the same marginal variance reduction per unit cost. Consequently, the MFMC allocation equalizes marginal efficiencies across fidelities and lies on the Pareto-optimal cost--variance frontier.

An additional observation is that the optimal sample allocation for MFMC shares essentially the same mathematical structure as that of multilevel Monte Carlo (MLMC) \cite{Gi:2008,Gi:2015}. Although the two estimators are constructed differently, their continuous optimal sample allocations differ only through the definition of the variance contribution associated with each level: $\sigma_1\Delta_k$ in MFMC plays an analogous role to the level-correction variance in MLMC.

\subsection{Model selection}

In practice, the per-sample costs $C_k$ and correlations $\rho_{1,k}$ associated with a collection of candidate models are unknown and must be estimated from pilot simulations. Given these estimates, including all available fidelity levels is not necessarily optimal, since some models may not contribute to the Pareto-optimal cost--variance tradeoff. A model-selection procedure is therefore required to identify a subset of fidelity levels that satisfies the assumptions of Theorem~\ref{thm:Sample_size_est} and minimizes the continuous MFMC cost \eqref{eq:MFMC_sampling_cost}. Let $\widetilde{\mathcal{S}}=\{1,\dots,\widetilde K\}$ denote the full set of available models. We seek an ordered subset
$\mathcal{S}=\{i_1,\dots,i_K\}\subseteq\widetilde{\mathcal{S}}$,
where $1=i_1<i_2<\cdots<i_K\leq \widetilde K$ ensures that the high-fidelity model is always included. Although the optimal subset may generally depend on the prescribed tolerance $\epsilon$, especially in the large-tolerance regime, we assume here that the selection depends only on the estimated costs and correlation information. Under this assumption, the same subset $\mathcal{S}$ can be reused for different prescribed variance tolerances.

The approach in \cite{PeWiGu:2016} uses exhaustive search to identify subsets satisfying the monotonicity and cost-correlation ordering conditions in Theorem~\ref{thm:Sample_size_est} while minimizing the resulting MFMC cost. To improve the search efficiency, we instead consider a depth-first search with backtracking that incrementally constructs candidate subsets and prunes branches that violate feasibility conditions or cannot improve the current best cost. Although the worst-case complexity remains exponential, the pruning strategy substantially reduces the number of candidate subsets explored in practice. The resulting procedure, summarized in Algorithm~\ref{algo:Backtracking_mfmc_selection}, returns the selected index set $\mathcal{S}$. Once the model subset has been selected, the continuous optimization problem is applied only to the selected models.

\normalem
\begin{algorithm}[!ht]
\label{algo:Backtracking_mfmc_selection}
\DontPrintSemicolon
\SetAlgoVlined
\SetKwProg{Fn}{Function}{}{}
\SetKwInOut{Input}{Input}
\SetKwInOut{Output}{Output}

\Input{%
Correlation coefficients $\boldsymbol{\rho}=(\rho_{1,1}, \ldots, \rho_{1,\widetilde K})$ and per-sample costs $\boldsymbol{C}=(C_1,\ldots,C_{\widetilde K})$.
}
\Output{%
  Selected model indices $\mathcal{S}$.
  
}
\hrule
 
\Fn{[idx\_for\_model, $\xi_{\min}$] = Model\_Selection\_Backtrack\,($\boldsymbol{\rho}, \boldsymbol{C}$)}{
Sort models by non-increasing $|\rho_{1,k}|$ with order $r$ and reorder $\boldsymbol{\rho}, \boldsymbol{C}$.\\

Initialize current\_idx = [1], $\xi_{\min}=1$, global\_idx = []\;  

\vspace{3mm}

[$\xi_{\text{min}}$, global\_idx] = \textbf{Backtrack} (current\_idx, $\xi_{\min}$, 2, global\_idx)\; 
    idx\_for\_model =  r(global\_idx)\;
}

\vspace{3mm}

\Fn{ [$\xi_{\text{min}}$, global\_idx] = \textbf{Backtrack} $\left(\text{current}\_\text{idx}, \, \xi, \,k_{\text{next}},\, global\_idx\right)$}{

  \If{$\xi \leq \xi_{\text{min}}\,$ }{
    $\xi_{\text{min}}=\xi$\;

    global\_idx = current\_idx\;
  }
  
  \If{$k_{\text{next}} > \widetilde{K}$}{ 
    \Return
  }
  
  \For{$k = k_{\text{next}}$ \KwTo $\widetilde{K}$}{ 
     prev\_idx = current\_idx(end)\;



    \If{%
      $\frac{\boldsymbol{C}({\text{prev}\_\text{idx}})}{\boldsymbol{C}(k)} > \frac{\boldsymbol{\rho}({\text{prev}\_\text{idx}})^2 - \boldsymbol{\rho}(k)^2}{\boldsymbol{\rho}(k)^2}$ 
    }{
        \textbf{continue}\;
    }

        
        Compute $\xi$ using \eqref{eq:MFMC_sampling_cost_efficiency} for  
      [current\textunderscore idx, $k$].

      \If{$\xi\ge \xi_{\text{min}}$ or $\,\xi>1$}{ 
    \textbf{continue}\;
    }
      
      [$\xi_{\text{min}}$, global\_idx] = \textbf{Backtrack} $(\, [\text{current}\_\text{idx},k], \, \xi,\, k+1,\, global\_idx)$.
  }
}

\vspace{3mm}

$\mathcal{S} = \{\text{idx}\_\text{for}\_\text{model}\}$ with size $K$.

\caption{Multi-fidelity Model Selection With Backtracking Pruning}
\end{algorithm}
\ULforem

\subsection{Recursive sample allocation}
A key structural property of problem \eqref{eq:Optimization_pb_sample_size} is that the weights $\boldsymbol{\alpha}$ appear only in the variance constraint $\mathcal{V}^{\mathrm{MF}}(\boldsymbol{\alpha},\boldsymbol{N})$, whereas the objective $\mathcal{W}^\text{MF} (\boldsymbol{N})$ depends only on the sample sizes. Therefore, for any fixed sample allocation $\boldsymbol{N}$, the optimization over $\boldsymbol{\alpha}$ can be performed independently. As shown in the proof of Theorem~\ref{thm:Sample_size_est}, this optimization admits the closed-form solution \eqref{eq:MFMC_RealValued_Sample_Size}. Substituting the resulting optimal weights into the variance constraint reduces the normalized variance to $\sum_{k=1}^K \Delta_k/N_k$ and yield the equivalent optimization problem
\begin{equation}\label{eq:Optimization_pb_sample_size_reduced_1}
    \begin{array}{ll}
    \displaystyle\min_{\boldsymbol{N} \in \mathbb{R}^{K}} 
        & \displaystyle \mathcal{W}^{\mathrm{MF}}(\boldsymbol{N}),
        \\[2mm]
    \text{subject to} 
        & \displaystyle \sum_{k=1}^K \frac{\Delta_k}{N_k} = \frac{\epsilon^2}{\sigma_1^2}, \\[2mm]
        & N_1 \ge 0,\quad N_{k-1} \le N_k,\;\; k=2,\ldots,K .
    \end{array}
\end{equation}
The reduced problem admits a natural sequential decomposition. Once the sample sizes on fidelity levels $1,\ldots,i-1$ are fixed, the remaining optimization depends on the previous decisions only through the residual normalized variance budget. This optimal-substructure property makes Bellman's principle of optimality \cite{Be:1957} applicable. Specifically, if $\boldsymbol N^*$ is an optimal solution of \eqref{eq:Optimization_pb_sample_size_reduced_1}, then, for any $i=1,\ldots,K$, the tail subsequence $(N_i^*,\ldots,N_K^*)$ is an optimal solution of the corresponding subproblem over fidelity levels $i,\ldots,K$. This leads to the following family of sequential subproblems
\begin{equation}\label{eq:Sequential_Optimization_revised}
    \begin{array}{ll}
    \displaystyle\min_{(N_i, \dots, N_K) \in \mathbb{R}^{K-i+1}} 
        & \displaystyle \sum_{k=i}^K C_k N_k, \\[4mm]
    \text{subject to} 
        & \displaystyle \sum_{k=i}^K \frac{\Delta_k}{N_k} = R_i, \\[2mm]
        & N_i \ge N_{i-1}, \quad N_{k-1} \le N_k,\;\; k=i+1,\ldots,K.
    \end{array}
\end{equation}
where $R_i$ denotes the residual normalized variance to be allocated among fidelity levels $i, \ldots, K$. With the initialization $R_1=\epsilon^2/\sigma_1^2$, it satisfies the recursion
\begin{equation}\label{eq:R_k_def}
    R_i = R_{i-1} - \frac{\Delta_{i-1}}{N_{i-1}}, \quad i=2, \dots, K.
\end{equation}

Compared with the original optimization problem \eqref{eq:Optimization_pb_sample_size_reduced_1}, the sequential subproblem \eqref{eq:Sequential_Optimization_revised} differs only in two respects: the prescribed variance tolerance is replaced by the residual normalized variance budget $R_i$, and the lower-bound constraint $N_1\ge 0$ is replaced by the coupling constraint $N_i\ge N_{i-1}$ to preserve the global monotonicity of the sample allocation. The latter coupling prevents the direct application of Theorem~\ref{thm:Sample_size_est}. We therefore consider the following relaxed subproblem, obtained by replacing the constraint $N_i\ge N_{i-1}$ with $N_i\ge0$ while retaining the remaining ordering constraints
\begin{equation}\label{eq:Sequential_Optimization_relaxed}
    \begin{array}{ll}
    \displaystyle\min_{(N_i, \dots, N_K) \in \mathbb{R}^{K-i+1}} 
        & \displaystyle \sum_{k=i}^K C_k N_k, \\[4mm]
    \text{subject to} 
        & \displaystyle \sum_{k=i}^K \frac{\Delta_k}{N_k} = R_i, \\[2mm]
        & N_i \ge 0, \quad N_{k-1} \le N_k, \;\; k=i+1, \dots, K.
    \end{array}
\end{equation}

The relaxed subproblem has the same mathematical structure as the reduced optimization problem \eqref{eq:Optimization_pb_sample_size_reduced_1}. Therefore, Theorem~\ref{thm:Sample_size_est} applies directly, yielding
\begin{equation}\label{eq:MFMC_Recursive_RealValued_Sample_Size}
    N_k^{*,(i)} = \sqrt{\frac{\Delta_k}{C_k}} \frac{S_i}{R_i}, \quad k=i,\ldots,K,
\end{equation}
where
\begin{equation}
\label{eq:S_i}
S_i := \sum_{j=i}^K \sqrt{C_j\Delta_j}.
\end{equation}

Theorem~\ref{thm:MFMC_Iterative_RealValued_Sample_Size} shows that, although the relaxed subproblems start from different fidelity levels, they assign the same optimal sample size to every common level. Consequently, the stage-wise solutions can be assembled into a single sample allocation, which coincides with the global optimum of the original MFMC allocation problem. The proof is deferred to the Appendix.

\begin{theorem}
\label{thm:MFMC_Iterative_RealValued_Sample_Size}
Under the assumptions of Theorem~\ref{thm:Sample_size_est}, let $(N_i^{*,(i)},\ldots, N_K^{*,(i)})$ denote the optimal solution of the relaxed
subproblem \eqref{eq:Sequential_Optimization_relaxed}, whose explicit form is
given by \eqref{eq:MFMC_Recursive_RealValued_Sample_Size}. Then
\begin{enumerate}
    \item \textbf{Invariance.}
    The scaling ratio is invariant across subproblems
    \begin{equation}
    \label{eq:R_k_S_k_1}
        \frac{S_i}{R_i} = \frac{S_{i-1}}{R_{i-1}}, 
        \qquad i=2,\ldots,K.
    \end{equation}

    \item \textbf{Cross-subproblem consistency.}
    For every level $k$ and every pair of admissible starting indices $i,j\le k$, the corresponding sample sizes satisfy
\[
N_k^{*,(i)}=N_k^{*,(j)}.
\]
Hence the stage-aligned sequence $(N_1^{*,(1)},\ldots, N_K^{*,(K)})$ is well defined and independent of the choice of subproblem index.
    \item \textbf{Global optimality.}
    The stage-wise sequence coincides with the global MFMC optimum
    \[
        N_k^{*,(k)} = N_k^*, \quad k=1,\ldots,K.
    \]

\end{enumerate}
\end{theorem}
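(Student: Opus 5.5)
The plan is to derive all three parts from the recursion \eqref{eq:R_k_def}, with the sample size $N_{i-1}$ fixed at the optimum of the previous relaxed subproblem. In other words, I take $N_{i-1}=N_{i-1}^{*,(i-1)}$, which is what the stage-wise construction does. Everything then reduces to the closed form \eqref{eq:MFMC_Recursive_RealValued_Sample_Size}, since Theorem~\ref{thm:Sample_size_est} gives the relaxed subproblem \eqref{eq:Sequential_Optimization_relaxed} the same structure as \eqref{eq:Optimization_pb_sample_size_reduced_1}. Here $R_i$ plays the role of $\epsilon^2/\sigma_1^2$, and the assumptions (i)--(ii) restricted to levels $i,\dots,K$ are inherited.

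\textbf{Invariance.} Write $\lambda_{i-1}:=S_{i-1}/R_{i-1}$. Substituting $N_{i-1}=\sqrt{\Delta_{i-1}/C_{i-1}}\,\lambda_{i-1}$ into \eqref{eq:R_k_def} gives
\[
R_i = R_{i-1}-\frac{\Delta_{i-1}}{N_{i-1}} = R_{i-1}-\frac{\sqrt{C_{i-1}\Delta_{i-1}}}{\lambda_{i-1}} = \frac{R_{i-1}}{S_{i-1}}\bigl(S_{i-1}-\sqrt{C_{i-1}\Delta_{i-1}}\bigr)=\frac{R_{i-1}S_i}{S_{i-1}},
\]
where the last step uses $S_{i-1}-\sqrt{C_{i-1}\Delta_{i-1}}=S_i$ from \eqref{eq:S_i}. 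Hence $S_i/R_i=S_{i-1}/R_{i-1}$. Before dividing, I check two things. First, $S_i>0$, because $\Delta_k>0$ for all $k\le K$ by assumption (i), including $\Delta_K=\rho_{1,K}^2>0$. Second, $R_i>0$, which follows from the displayed identity by induction. These facts also show that each relaxed subproblem is well posed, with a positive residual budget.

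\textbf{Consistency and global optimality.} By induction on \eqref{eq:R_k_S_k_1}, $S_i/R_i=S_1/R_1=\sigma_1^2 S_1/\epsilon^2$ for every $i$. Then \eqref{eq:MFMC_Recursive_RealValued_Sample_Size} gives
\[
N_k^{*,(i)}=\sqrt{\Delta_k/C_k}\,\sigma_1^2S_1/\epsilon^2 \quad\text{for all } i\le k,
\]
which is independent of $i$. This proves cross-subproblem consistency. Taking $i=k$ and comparing with \eqref{eq:MFMC_RealValued_Sample_Size} yields $N_k^{*,(k)}=N_k^*$.

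As a remark, the assembled sequence is monotone by assumption (ii): the sequence $\Delta_k/C_k$ is increasing, so $N_k^*$ is increasing. Therefore the coupling constraint $N_i\ge N_{i-1}$, which was dropped in the relaxation, is satisfied automatically, and the relaxation is tight.

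The main obstacle is the interpretive point that the recursion \eqref{eq:R_k_def} must be evaluated along the stage-wise optimal choices, so that $R_i$ is defined consistently. Once that is fixed, the rest is the one-line telescoping identity above together with a check that Theorem~\ref{thm:Sample_size_est}'s hypotheses hold for the tail model set.
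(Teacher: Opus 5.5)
Your proposal is correct and is essentially the paper's proof: you substitute the stage-wise optimum $N_{i-1}^{*,(i-1)}=\sqrt{\Delta_{i-1}/C_{i-1}}\,S_{i-1}/R_{i-1}$ into the residual recursion to get $R_i=R_{i-1}S_i/S_{i-1}$, and then read off consistency and $N_k^{*,(k)}=\frac{\sigma_1^2S_1}{\epsilon^2}\sqrt{\Delta_k/C_k}=N_k^*$ from $S_i/R_i=S_1/R_1$. Your added positivity check is a small refinement; only $\Delta_K>0$ is needed for $S_i>0$, since assumption (i) as stated does not by itself force $\Delta_1=1-\rho_{1,2}^2>0$. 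Your tightness remark repeats the argument the paper gives in the main text after the theorem.
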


The remaining step is to justify that the relaxation introduced in
\eqref{eq:Sequential_Optimization_relaxed} yields the same solution as the sequential subproblem
\eqref{eq:Sequential_Optimization_revised}. Since $N_k>0$ for all $k$, the change of variables
$y_k=N_k^{-1}$ transforms the sequential subproblem into a convex optimization problem with a strictly convex objective, a convex feasible set, and affine constraints. Consequently, the sequential subproblem admits a unique minimizer. Moreover, under the assumptions of Theorem~\ref{thm:Sample_size_est}, the minimizer of the relaxed subproblem is strictly increasing and therefore satisfies the monotonicity constraint in
\eqref{eq:Sequential_Optimization_revised}. Hence, it is feasible for the sequential subproblem. Since the feasible set of the sequential subproblem is contained in that of the relaxed problem, the relaxed minimizer is also optimal for the sequential subproblem. By uniqueness, the relaxed and sequential subproblems have the same minimizer.

Consequently, the continuous MFMC optimum can be constructed sequentially. Starting from
$R_1=\epsilon^2/\sigma_1^2$, the sample size $N_k^{*,(k)}$ is computed from
\eqref{eq:MFMC_Recursive_RealValued_Sample_Size} at each stage, after which the residual variance budget is updated according to $R_{k+1} = R_k-\Delta_k/N_k^{*,(k)}$. This recursive construction provides a natural framework  for the integer-valued sample allocation developed in the next section.

\section{Recursive integer-valued sample allocation}\label{sec:Iterative_IntegerValued_Sample_Size}
Since sample sizes must be integers in practice, the continuous MFMC allocation is generally infeasible. A natural approach is to round the optimal continuous solution. In \cite{PeWiGu:2016}, which considers variance minimization under a fixed cost budget, floor-based rounding satisfies the prescribed budget constraint but may assign zero samples to the high-fidelity model when the budget is limited, resulting in an infeasible estimator. To address this issue, \cite{GrGuJuWa:2023} proposed a hybrid floor--ceiling rounding strategy that selectively rounds sample sizes up or down to ensure feasibility. Although this approach avoids zero-sample allocations, it remains a post-processing procedure applied to the continuous solution. For the present variance-constrained cost minimization formulation, we use the ceiling rule as a simple baseline to obtain an integer-valued sample allocation. It guarantees $\lceil N_k^*\rceil\ge1$ for every fidelity level and therefore satisfies the prescribed variance constraint $\mathcal{V}_n^{\mathrm{MF}}\le\epsilon^2/\sigma_1^2$. This construction implicitly assumes $\epsilon\le\sigma_1$, analogous to the minimum cost budget requirement $p\ge\sum_kC_k$ in \cite{GrGuJuWa:2023}. The resulting variance and cost satisfy
%
\begin{equation}\label{eq:bounds_for_ceil}
\begin{aligned}
    \mathcal{V}^{\mathrm{MF}}_n \in \left(\sum_{k=1}^K\frac{\Delta_{k}}{N_k^*+1},\frac{\epsilon^2}{\sigma_1^2}\right], \qquad
\mathcal{W}^{\text{MF}}\in \left[\frac{\sigma_1^2}{\epsilon^2}\left(\sum_{k=1}^K\sqrt{C_k\Delta_k}\right)^2, \frac{\sigma_1^2}{\epsilon^2}\left(\sum_{k=1}^K\sqrt{C_k\Delta_k}\right)^2+\sum_{k=1}^K C_k\right).
\end{aligned}
\end{equation}
Although the ceiling strategy guaranties feasibility, it over-allocates samples relative to the continuous optimum. As a result, the computational cost can exceed the continuous optimum by as much as $\sum_{k=1}^K C_k$ and the available variance budget $\epsilon^2/\sigma_1^2$ is not fully used. This inefficiency is particularly pronounced in the pre-asymptotic regime, where the optimal sample sizes are small and rounding errors become relatively significant.  

To better use the available variance budget, we develop a recursive integer-valued sample allocation strategy based on the sequential construction introduced in the previous section. Instead of rounding the continuous allocation only after the recursion is completed, the rounding operation is incorporated into the recursion itself. At each stage, the current sample allocation is rounded upward, and the corresponding variance contribution is deducted from the remaining variance budget before computing the allocation for the next fidelity level. In this way, each subsequent allocation accounts for the rounding decisions made at the preceding levels. This defines the following recursive continuous sample-size proxies
\begin{equation}\label{eq:MFMC_New_IntegerValued_Sample_Size}
    M_1^* = \sqrt{\frac{\Delta_1}{C_1}}\frac{\sum_{j=1}^K\sqrt{C_j\Delta_j}}{\frac{\epsilon^2}{\sigma_1^2}}, 
    \qquad 
    M_k^* = \sqrt{\frac{\Delta_k}{C_k}}\frac{\sum_{j=k}^K\sqrt{C_j\Delta_j}}{\frac{\epsilon^2}{\sigma_1^2}-\sum_{j=1}^{k-1}\frac{\Delta_j}{\left\lceil M_j^* \right\rceil}}, 
    \quad k = 2,\ldots, K.
\end{equation}

Theorem~\ref{thm:MFMC_New_IntegerValued_Variance_Cost}  compares the proposed recursive allocation with the continuous optimum and the direct ceiling-rounding strategy. It shows that the recursive construction generates no larger continuous sample-size proxies than the continuous optimum and, after ceiling, produces an integer allocation whose variance remains feasible while whose computational cost is never larger than that obtained by direct ceiling rounding.
\begin{theorem}
\label{thm:MFMC_New_IntegerValued_Variance_Cost}
Under the assumptions of Theorem~\ref{thm:Sample_size_est}, let $M_k^*$
be the continuous sample-size proxies generated by the recursive scheme
\eqref{eq:MFMC_New_IntegerValued_Sample_Size}, and let
$N_k^*$
be the continuous MFMC optimal allocation given by
\eqref{eq:MFMC_RealValued_Sample_Size}.
Then
\[
M_k^*\le N_k^*,\qquad k=1,\ldots,K.
\]

Consequently, the corresponding integer-valued allocations satisfy
\begin{align}
\label{eq:Iterative_integer_sample_size_variance_bound}
&\sum_{k=1}^K \frac{\Delta_k}{\left\lceil N_k^* \right\rceil}
\;\le\;
\sum_{k=1}^K \frac{\Delta_k}{\left\lceil M_k^* \right\rceil}
\;\le\;
\frac{\epsilon^2}{\sigma_1^2},\\
\label{eq:Iterative_integer_sample_size_cost_bound}
&\frac{\sigma_1^2}{\epsilon^2}\left(\sum_{k=1}^K\sqrt{C_k\Delta_k}\right)^2
    \;\le\;
    \sum_{k=1}^K C_k \left\lceil M_k^* \right\rceil
    \;\le\;
    \sum_{k=1}^K C_k \left\lceil N_k^* \right\rceil.
\end{align}
\end{theorem}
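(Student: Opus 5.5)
The plan is to track the residual variance budget along the recursion \eqref{eq:MFMC_New_IntegerValued_Sample_Size} and compare it with the continuous residual $R_k$ from \eqref{eq:R_k_def}. Define the \emph{rounded residuals}
\[
\widetilde R_1 := \frac{\epsilon^2}{\sigma_1^2},\qquad \widetilde R_{k+1} := \widetilde R_k - \frac{\Delta_k}{\lceil M_k^*\rceil},\quad k=1,\ldots,K,
\]
so that $M_k^* = \sqrt{\Delta_k/C_k}\, S_k/\widetilde R_k$ with $S_k$ as in \eqref{eq:S_i}. By Theorem~\ref{thm:MFMC_Iterative_RealValued_Sample_Size}, the continuous optimum satisfies $N_k^* = \sqrt{\Delta_k/C_k}\, S_k/R_k$ with $S_k/R_k = S_1/R_1$ for all $k$. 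Since $\widetilde R_1 = R_1$, the claim $M_k^*\le N_k^*$ is equivalent to
\[
\frac{\widetilde R_k}{S_k} \;\ge\; \frac{R_1}{S_1}\qquad\text{for all } k.
\]
I will prove this by induction, together with $\widetilde R_k>0$, which is needed for $M_k^*$ to be well defined and positive.

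The key one-step estimate uses $\lceil M_k^*\rceil \ge M_k^*>0$:
\[
\widetilde R_{k+1} \;\ge\; \widetilde R_k - \frac{\Delta_k}{M_k^*} \;=\; \widetilde R_k - \frac{\sqrt{C_k\Delta_k}\,\widetilde R_k}{S_k} \;=\; \widetilde R_k\,\frac{S_{k+1}}{S_k}.
\]
For $k<K$, we have $S_{k+1}\ge\sqrt{C_K\Delta_K}>0$, because $\Delta_K=\rho_{1,K}^2>0$ by assumption (i); all $\Delta_k>0$ by (i) as well. Hence $\widetilde R_{k+1}>0$ and $\widetilde R_{k+1}/S_{k+1}\ge \widetilde R_k/S_k$, which closes the induction and gives $M_k^*\le N_k^*$. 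I expect this well-definedness bookkeeping (positivity of $\widetilde R_k$ and of the denominators) to be the only delicate point; the remainder is monotonicity.

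For the variance bounds \eqref{eq:Iterative_integer_sample_size_variance_bound}, telescoping gives
\[
\sum_{k=1}^K \frac{\Delta_k}{\lceil M_k^*\rceil} = \frac{\epsilon^2}{\sigma_1^2} - \widetilde R_{K+1}.
\]
The same one-step estimate at $k=K$, where $S_{K+1}=0$, yields $\widetilde R_{K+1}\ge 0$, which is the right inequality. The left inequality follows from $\lceil M_k^*\rceil\le\lceil N_k^*\rceil$, a consequence of $M_k^*\le N_k^*$ and the monotonicity of the ceiling. The same monotonicity gives the right cost inequality in \eqref{eq:Iterative_integer_sample_size_cost_bound}.

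For the left cost inequality, I will avoid the ordering constraints altogether and use Cauchy--Schwarz. For any positive $n_k$,
\[
\Big(\sum_k\sqrt{C_k\Delta_k}\Big)^2 \le \Big(\sum_k C_k n_k\Big)\Big(\sum_k \frac{\Delta_k}{n_k}\Big).
\]
Applying this with $n_k=\lceil M_k^*\rceil$ and using the variance bound $\sum_k\Delta_k/\lceil M_k^*\rceil\le\epsilon^2/\sigma_1^2$ just established gives $\sum_k C_k\lceil M_k^*\rceil \ge \frac{\sigma_1^2}{\epsilon^2}\big(\sum_k\sqrt{C_k\Delta_k}\big)^2$, which completes the proof.
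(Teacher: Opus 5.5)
Your proposal is correct and follows the paper's proof. Your claim $\widetilde R_k/S_k\ge R_1/S_1$ is the paper's induction claim $T_m\le \frac{\epsilon^2}{\sigma_1^2}\bigl(1-S_{m+1}/S_1\bigr)$ rewritten via $T_m=\frac{\epsilon^2}{\sigma_1^2}-\widetilde R_{m+1}$, and your telescoping, ceiling-monotonicity and Cauchy--Schwarz steps match the paper's. Your multiplicative one-step bound $\widetilde R_{k+1}\ge \widetilde R_k S_{k+1}/S_k$ is actually the cleaner way to run that induction, because it combines the $T_{m-1}$-dependent terms before applying the hypothesis, and it also makes explicit the positivity of the residuals, which the paper leaves implicit.
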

The integer-valued allocation produced by the proposed method is
$(\lceil M_1^*\rceil,\ldots,\lceil M_K^*\rceil)$,
which is then used in the standard MFMC estimator described in
Algorithm~\ref{algo:MFMC_Algo}.

\begin{algorithm}[!ht]
\DontPrintSemicolon
\label{algo:MFMC_Algo}

\KwIn{Correlations $\rho_{1,k}$, costs $C_k$, weights $\alpha_k$, and prescribed tolerance $\epsilon$.}

\KwOut{Integer sample allocation $\lceil\boldsymbol{M}^*\rceil$ and MFMC estimator $A^{\mathrm{MF}}$.}

\vspace{1ex}
\hrule
\vspace{1ex}

Determine the integer sample allocation
$\lceil\boldsymbol{M}^*\rceil
=(\lceil M_1^*\rceil,\ldots,\lceil M_K^*\rceil)$
using the recursive scheme
\eqref{eq:MFMC_New_IntegerValued_Sample_Size}.\;

Evaluate the high-fidelity model $u_1$ using
$\lceil M_1^*\rceil$
i.i.d.\ samples and compute
$A^{\mathrm{MC}}_{1,\lceil M_1^*\rceil}$.\;

\For{$k=2,\ldots,K$}{

Evaluate the low-fidelity model $u_k$ on the shared
$\lceil M_{k-1}^*\rceil$ samples and compute
$A^{\mathrm{MC}}_{k,\lceil M_{k-1}^*\rceil}$.\;

Evaluate $u_k$ on the remaining
$\lceil M_k^*\rceil-\lceil M_{k-1}^*\rceil$
independent samples and compute the corresponding sample average
$A^{\mathrm{MC}}_{k,\lceil M_k^*\rceil\setminus\lceil M_{k-1}^*\rceil}$.\;
}

Construct the MFMC estimator
$A^{\mathrm{MF}}$
according to
\eqref{eq:MFMC_estimator_independent}.

\caption{Recursive multifidelity Monte Carlo sampling}
\end{algorithm}

The proposed recursive construction is not restricted to the variance-constrained cost minimization formulation considered here. It applies equally to the equivalent formulation of variance minimization under a fixed cost budget. Furthermore, as discussed in the previous section, the reduced continuous optimization problem \eqref{eq:Optimization_pb_sample_size_reduced_1} has essentially the same mathematical structure as the sample allocation problem in multilevel Monte Carlo \cite{Gi:2008,Gi:2015}. Consequently, the same forward recursive rounding strategy extends naturally to MLMC, yielding integer-valued sample allocations with guaranteed variance control.

\section{Numerical results}\label{sec:Num_Result}
We evaluate the proposed recursive integer-valued MFMC allocation strategy on four representative test problems, including a plasma equilibrium model \cite{ElLiSa:2023,ElLiSa:2026}, a tubular reactor problem \cite{PeWiGu:2016}, the Ishigami benchmark \cite{QiPeOMVe:2018}, and a heterogeneous elasticity model \cite{GoGeElJa:2020}. These examples span a broad range of application domains, model correlations, and computational costs, and are therefore well suited for assessing the robustness of integer-valued sample allocation schemes. For each problem, we compare the proposed recursive ceiling-based allocation with three integer-valued approaches: direct flooring \cite{PeWiGu:2016}, modified flooring \cite{GrGuJuWa:2023}, and direct ceiling rounding. Results are presented for two various prescribed relative variance tolerances with $ \epsilon\le \sigma_1$. The corresponding continuous MFMC optimal allocation is also reported as a reference, attaining the prescribed variance with minimal expected computational cost discussed in \eqref{eq:MFMC_sampling_cost}. All computations were performed in {\tt MATLAB} R2024a on a MacBook Pro with a 14-core Apple M4 Pro processor and 24~GB of RAM.

Table~\ref{tab:MFMC_integer_comparison_Eg1} summarizes the results for the plasma equilibrium problem with five fidelity levels. We first consider the tolerance $\epsilon^2/\sigma_1^2=5.9276\times10^{-4}$, for which the continuous optimal allocation satisfies $N_1^*<1$. The continuous MFMC allocation attains the prescribed variance at the minimum cost of $73.03$.  Direct flooring assigns zero samples to the highest-fidelity model, rendering the resulting MFMC estimator infeasible. The modified flooring strategy restores feasibility by enforcing at least one sample for each selected model. However, despite consuming nearly the entire available computational budget, it substantially inflates the resulting variance. In this example, the first three models receive identical sample sizes. Since MFMC variance reduction arises from the differences between consecutive sample sets in \eqref{eq:MFMC_estimator}, assigning identical sample sizes to adjacent fidelity levels provides no additional variance reduction while still incurring computational cost\footnote{The occurrence of identical consecutive sample sizes is not specific to any particular rounding strategy, but instead results from performing integer sample allocation on a fixed fidelity hierarchy. In practice, the choice of fidelity levels may itself depend on the prescribed variance tolerance or computational budget. Joint optimization of fidelity selection and integer sample allocation therefore leads to a more general discrete optimization problem, which is beyond the scope of the present work.}. Consequently, the resulting estimator attains a variance of $2.4834\times10^{-2}$, more than forty times larger than the prescribed tolerance. Ceiling-based rounding avoids the feasibility issue of flooring and satisfies the prescribed variance constraint. However, independently rounding each continuous sample size upward introduces unnecessary oversampling, increasing the computational cost to $81.71$, corresponding to an overhead of $11.9\%$ relative to the continuous optimum. The proposed recursive ceiling strategy addresses this limitation by sequentially updating the remaining variance budget after each rounding step. As a result, it preserves the prescribed variance almost exactly while reducing the total cost to $77.64$, recovering approximately one half of the excess cost introduced by direct ceiling rounding.

For the more stringent tolerance, $\epsilon^2/\sigma_1^2=5\times10^{-5}$, all methods remain feasible because the continuous optimal allocation assigns more than one sample to the highest-fidelity model. Direct flooring underutilizes the computational budget by approximately 3.8\%, resulting in a variance about 3.9\% above the prescribed tolerance. In contrast, direct ceiling satisfies the variance constraint conservatively, leaving approximately 4.4\% of the available variance budget unused while incurring a 4.7\% increase in computational cost. The proposed recursive allocation again nearly attains the prescribed variance, with a total cost within 1.5\% of the continuous optimum. Together with the previous tolerance case, these results demonstrate that the proposed recursive ceiling strategy consistently achieves near-optimal computational cost while accurately satisfying the prescribed variance constraint across different tolerance levels.

\begin{table}[ht]
\centering
\scalebox{0.9}{
\begin{tabular}{cc} 

\begin{minipage}{0.2\linewidth}
\centering
\small
\begin{tabular}{|c|c|}
\hline
\textbf{Model} & \textbf{$\rho_{1,k}$} \\
\hline
1 & 1 \\
2 & 9.9977e-01 \\
3 & 9.9925e-01 \\
4 & 9.9728e-01 \\
5 & 9.8390e-01 \\
\hline
& $C_k$ \\ \hline
1 & 73 \\
2 & 7.0318e-03 \\
3 & 1.4018e-03 \\
4 & 5.0613e-04 \\
5 & 2.6803e-04 \\
\hline
\end{tabular}
\end{minipage}
&
\begin{minipage}{0.8\linewidth}
\centering
\small
\begin{tabular}{|c|p{5.5cm}|c|c|}
\hline
\textbf{Method} & \textbf{Sample size} & \textbf{Cost} & \textbf{$\mathcal{V}^{\mathrm{MF}}_n$} \\ \hline

\multicolumn{4}{|c|}{\textbf{Tolerance: $\epsilon^2/\sigma_1^2 = 5.9276\times 10^{-4}$}} \\ \hline
Real-valued 
& [8.81e-01, 1.35e+02, 5.88e+02, 2.54e+03, 2.11e+04]
& 7.3030e+01 & 5.9276e-04  \\ \hline
Direct floor 
& [0, 134, 587, 2540, 21094]
& 8.7045e+00 & $\infty$ \\ \hline
Modified 
& [1, 1, 1, 7, 62]
&7.3030e+01 & 2.4834e-02  \\ \hline
Direct ceil
& [1, 135, 588, 2541, 21095]
& 8.1714e+01 &5.9275e-04\\ \hline
Iterative ceil
& [1, 72, 313, 1353, 11229]
& 7.7640e+01&5.9276e-04\\ \hline



\multicolumn{4}{|c|}{\textbf{Tolerance: $\epsilon^2/\sigma_1^2 = 5\times 10^{-5} $}} \\ \hline
Real-valued 
& [1.04e+01, 1.60e+03, 6.97e+03, 3.01e+04, 2.50e+05]
& 8.6578e+02  &5.0000e-05 \\ \hline
Direct floor 
& [10, 1599, 6970, 30114, 250079]
&8.3328e+02  &5.1960e-05   \\ \hline
Direct ceil
& [11, 1600, 6971, 30115, 250080]
& 9.0629e+02 & 4.7779e-05\\ \hline
Iterative ceil
& [11, 1166, 5079, 21943, 182217]
& 8.7826e+02 & 5.0000e-05 \\ \hline

\end{tabular}
\end{minipage}

\end{tabular}
}
\caption{Model parameters and MFMC sample allocations for the plasma equilibrium problem \cite{ElLiSa:2023,ElLiSa:2026}. Continuous and integer-valued MFMC allocations are compared for two prescribed relative variance tolerances, $\epsilon^2/\sigma_1^2=5.9276\times10^{-4}$ and $5\times10^{-5}$. All five candidate models are included in the MFMC estimator.}
\label{tab:MFMC_integer_comparison_Eg1}
\end{table}

\begin{figure}[!t]\centering
\begin{tabular}{cc}
\includegraphics[width=0.48\linewidth]{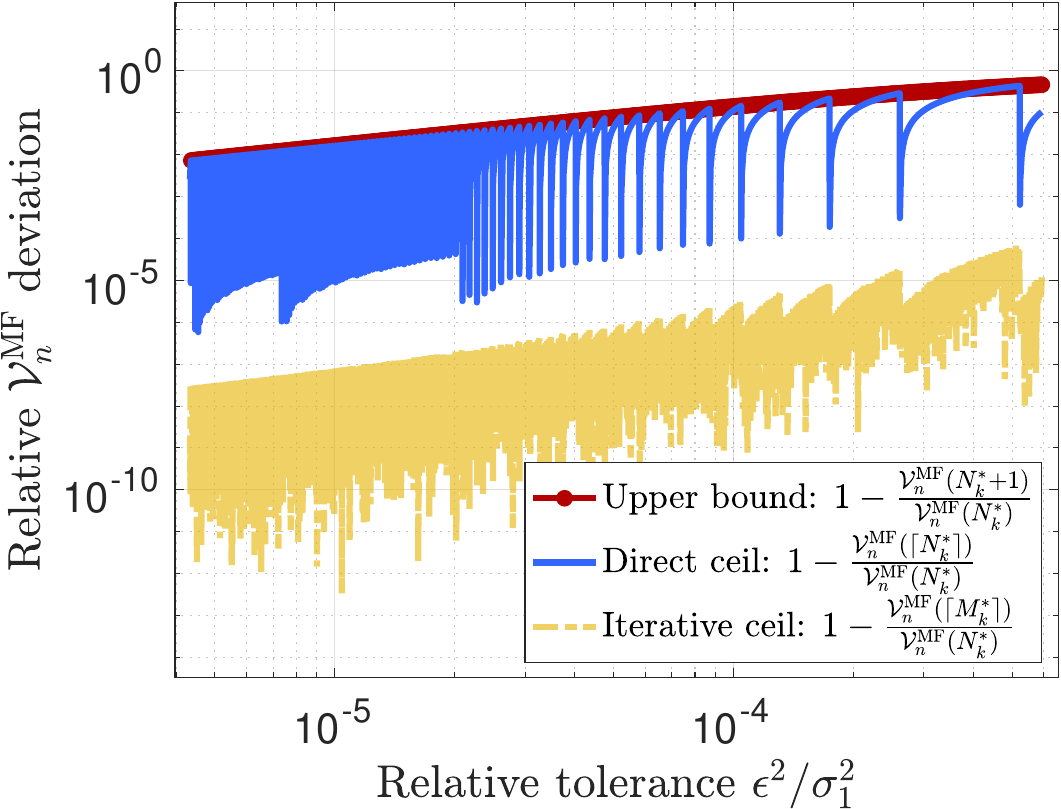} &
\includegraphics[width=0.48\linewidth]{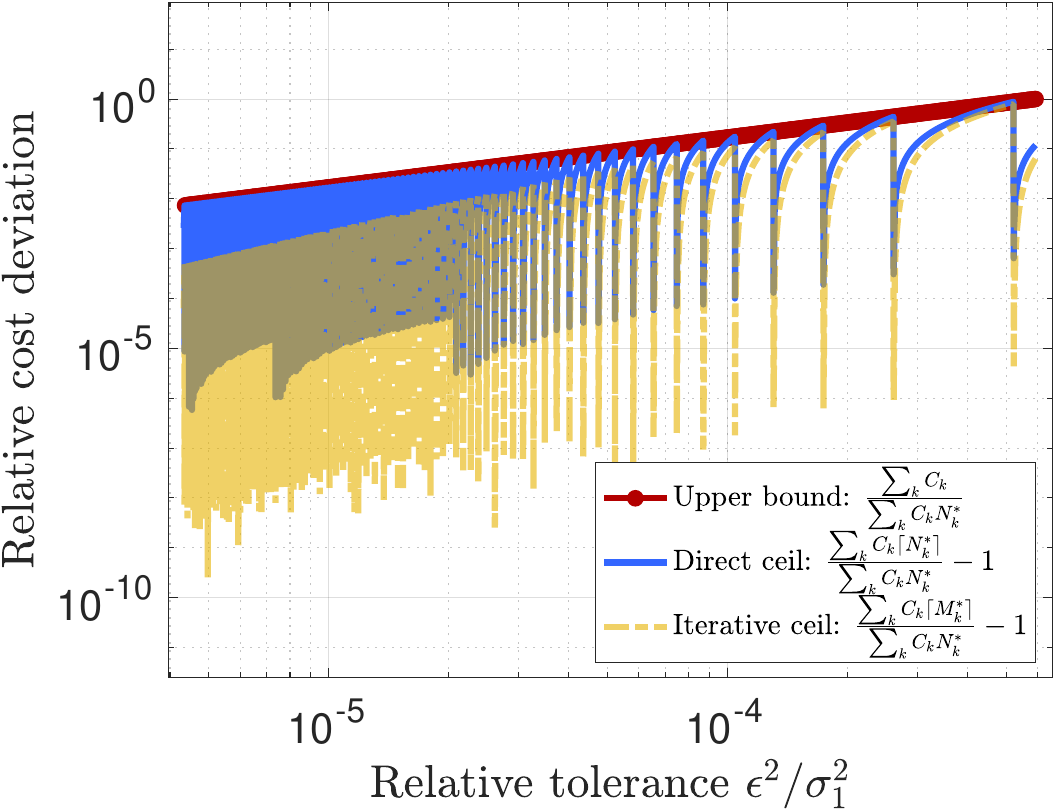}\\
\includegraphics[width=0.48\linewidth]{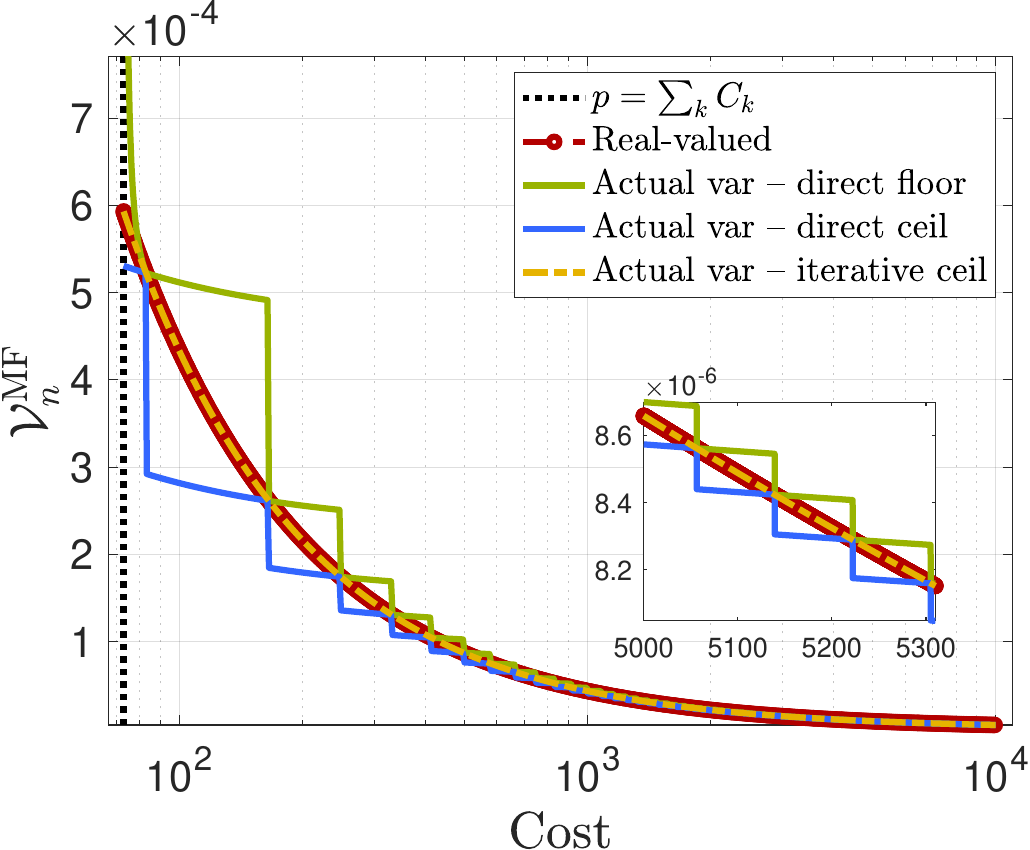} &
\includegraphics[width=0.48\linewidth]{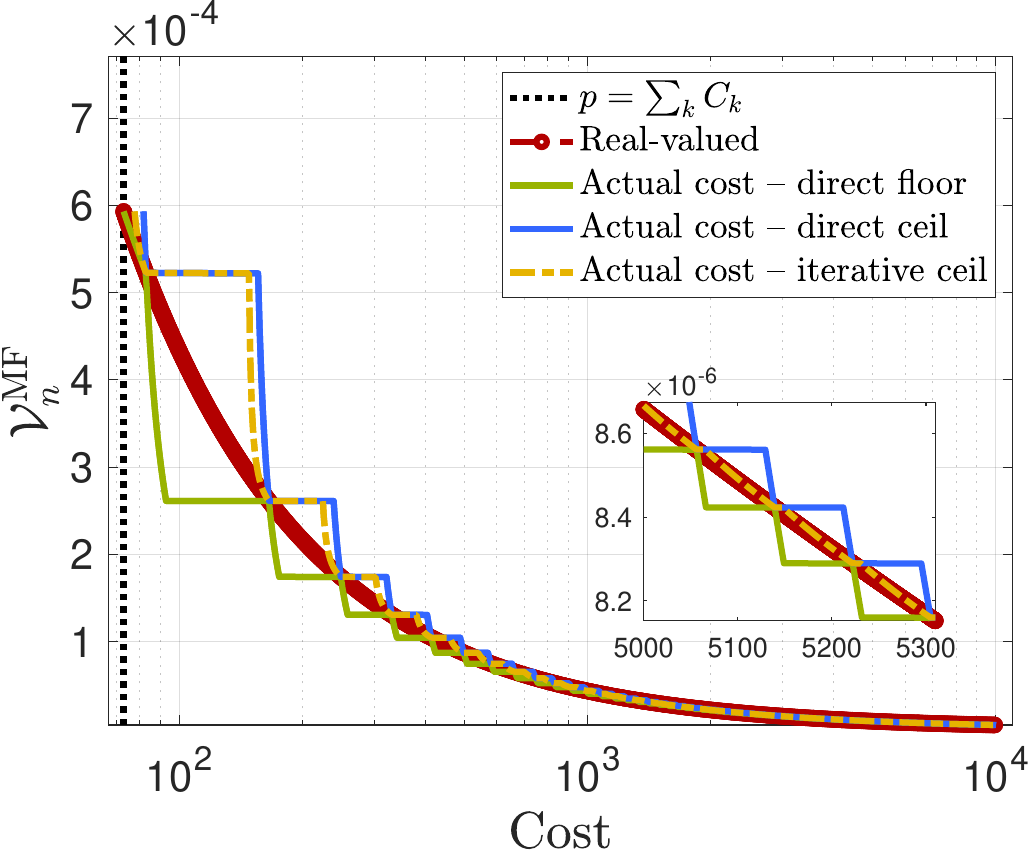}\\
\end{tabular}

\caption{Comparison of integer MFMC allocation strategies over a range of prescribed relative variance tolerances $\mathcal{V}_n^{\mathrm{MF}}\in[4.33\times10^{-6},\,5.93\times10^{-4}]$. Top left: Relative variance deviation from the prescribed tolerance. Top right: Relative cost increase with respect to the continuous MFMC optimum. Bottom left: Achieved variance as a function of computational cost. Bottom right: Computational cost as a function of achieved variance.}
\label{fig:Eg1} 
\end{figure}

Figure~\ref{fig:Eg1} further illustrates the performance of the different integer-valued allocation strategies over a wide range of prescribed relative variance tolerances, $\mathcal{V}_n^{\mathrm{MF}}\in[4.33\times10^{-6},\,5.93\times10^{-4}]$, corresponding to continuous computational budgets ranging from $\sum_k C_k$ to $10^4$. The top panels show the relative deviations in variance and cost from the continuous MFMC optimum, while the bottom panels compare the achieved variance at fixed cost and the achieved cost at fixed variance.

The recursive allocation consistently tracks the continuous optimum more closely than direct ceiling rounding. As shown in the top-left panel, recursive scheme achieves variance values that remain very close to the prescribed tolerance throughout the entire range of test cases, whereas direct ceiling over-allocates samples and therefore leaves part of the available variance budget unused. This improved utilization of the variance budget translates directly into lower computational cost, as illustrated in the top-right panel, where the recursive allocation remains closer to the continuous optimum than direct ceiling, with both strategies satisfying the upper bound established in \eqref{eq:bounds_for_ceil}.

The bottom panels provide a complementary view of the integer allocation behavior by considering the achieved variance for a prescribed computational budget and the achieved cost for a prescribed variance tolerance. The observed sawtooth patterns are a direct consequence of integer-valued sample allocations. As the prescribed tolerance varies continuously, the corresponding continuous optimal sample sizes change smoothly, whereas the rounded integer allocations remain unchanged over intervals until one or more continuous sample sizes cross integer thresholds. At such transition points, additional samples are introduced, resulting in discrete changes in both computational cost and achieved variance. From a Pareto-optimality perspective, the continuous MFMC allocation lies on the optimal cost--variance frontier, while integer rounding introduces deviations from this frontier. Direct flooring reduces computational cost at the expense of violating the prescribed variance constraint, whereas direct ceiling satisfies the variance constraint conservatively but introduces additional computational overhead.  By incorporating rounding decisions into the recursive residual-variance update, the proposed method remains much closer to the Pareto frontier. The advantage of the recursive strategy is most evident in the pre-asymptotic regime, where the prescribed tolerance is relatively large and the optimal sample sizes are small, making the allocation more sensitive to integer rounding effects. As the prescribed tolerance decreases and the continuous optimal sample sizes grow, the relative impact of integer rounding diminishes, and the performance differences among the integer-valued allocation strategies become smaller.

\begin{table}[ht]
\centering
\scalebox{0.9}{
\begin{tabular}{cc} 

\begin{minipage}{0.2\linewidth}
\centering
\small
\begin{tabular}{|c|c|}
\hline
\textbf{Model} & \textbf{$\rho_{1,k}$} \\
\hline
1 & 1 \\
2 & 9.9999e-01 \\
3 & 9.9997e-01 \\
4 & 9.9583e-01 \\
\hline
&$C_k$\\
\hline
1 & 4.4395e+01 \\
2 & 6.8409e-01 \\
3 & 2.9937e-01 \\
4 & 1.9908e-04 \\
\hline
\end{tabular}
\end{minipage}
&
\begin{minipage}{0.8\linewidth}
\centering
\small
\begin{tabular}{|c|p{5.5cm}|c|c|c|}
\hline
\textbf{Method} & \textbf{Sample size} & \textbf{Cost} & \textbf{$\mathcal{V}^{\mathrm{MF}}_n$}\\ \hline

\multicolumn{4}{|c|}{\textbf{Tolerance: $\epsilon^2/\sigma_1^2 = 2.1987\times 10^{-4}$}} \\ \hline
Real-valued 
& [3.33e-01, 2.92e+00, 7.61e+01, 3.23e+04]
& 4.6000e+01 & 2.1987e-04 \\ \hline
Direct floor 
& [0, 2, 76, 32282]
& 3.0547e+01 & $\infty$ \\ \hline
Modified 
& [1, 1, 2, 1018]
& 4.5880e+01 & 5.1658e-03 \\ \hline
Direct ceil
& [1, 3, 77, 32283]
& 7.5926e+01 &1.7112e-04 \\ \hline
Iterative ceil
& [1, 3, 57, 23763]
& 6.8242e+01 & 2.1987e-04\\ \hline

\multicolumn{4}{|c|}{\textbf{Tolerance: $\epsilon^2/\sigma_1^2 = 5 \times 10^{-5}$}} \\ \hline
Real-valued 
& [1.47e+00, 1.28e+01, 3.35e+02, 1.42e+05]
& 2.0228e+02 &5.0000e-05 \\ \hline
Direct floor 
& [1, 12, 334, 141959]
& 1.8085e+02 & 5.7694e-05 \\ \hline
Direct ceil 
& [2, 13, 335, 141960]
& 2.2623e+02 & 4.5642e-05\\ \hline
Iterative ceil
& [2, 12, 296, 125393]
& 2.1058e+02 & 5.0000e-05\\ \hline
\end{tabular}
\end{minipage}

\end{tabular}
}
\caption{Model parameters and MFMC sample allocations for the tubular reactor
example \cite{PeWiGu:2016}. Continuous and integer-valued MFMC allocations are compared for two prescribed relative variance tolerances, $\epsilon^2/\sigma_1^2 = 2.1987\times 10^{-4}$ and $5\times 10^{-5}$. All four candidate models are selected.}
\label{tab:MFMC_integer_comparison_Eg2}
\end{table}

\begin{figure}[!t]\centering
\begin{tabular}{cc}
\includegraphics[width=0.48\linewidth]{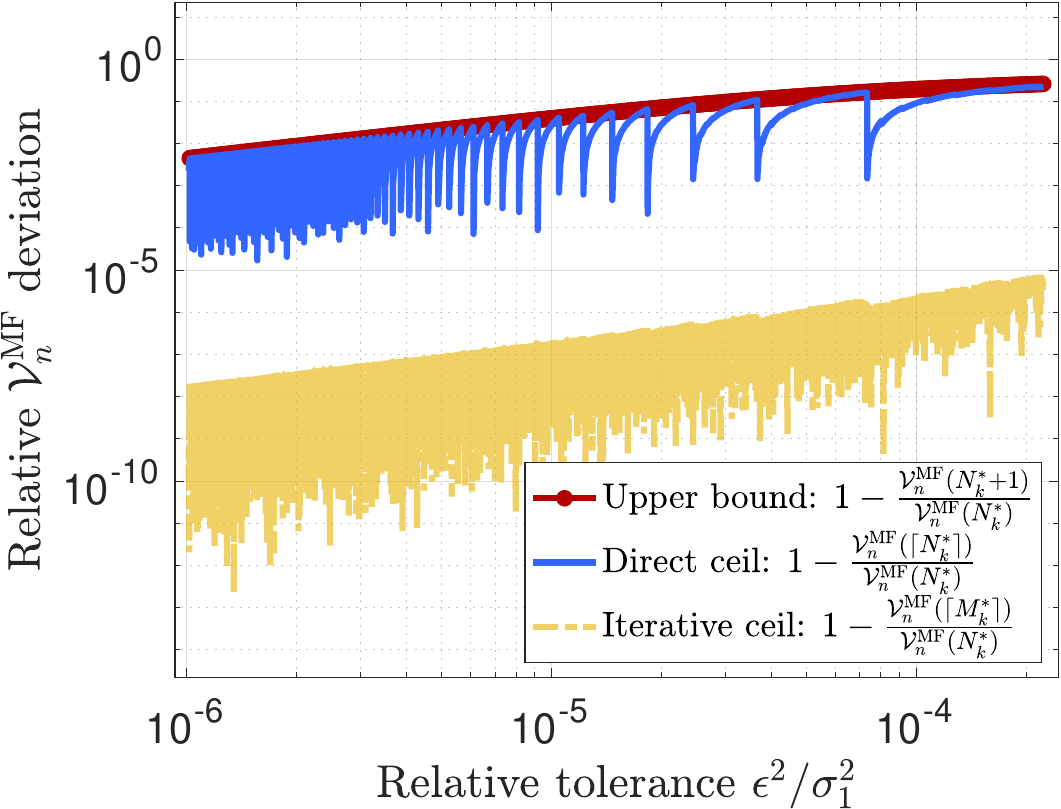}&
\includegraphics[width=0.48\linewidth]{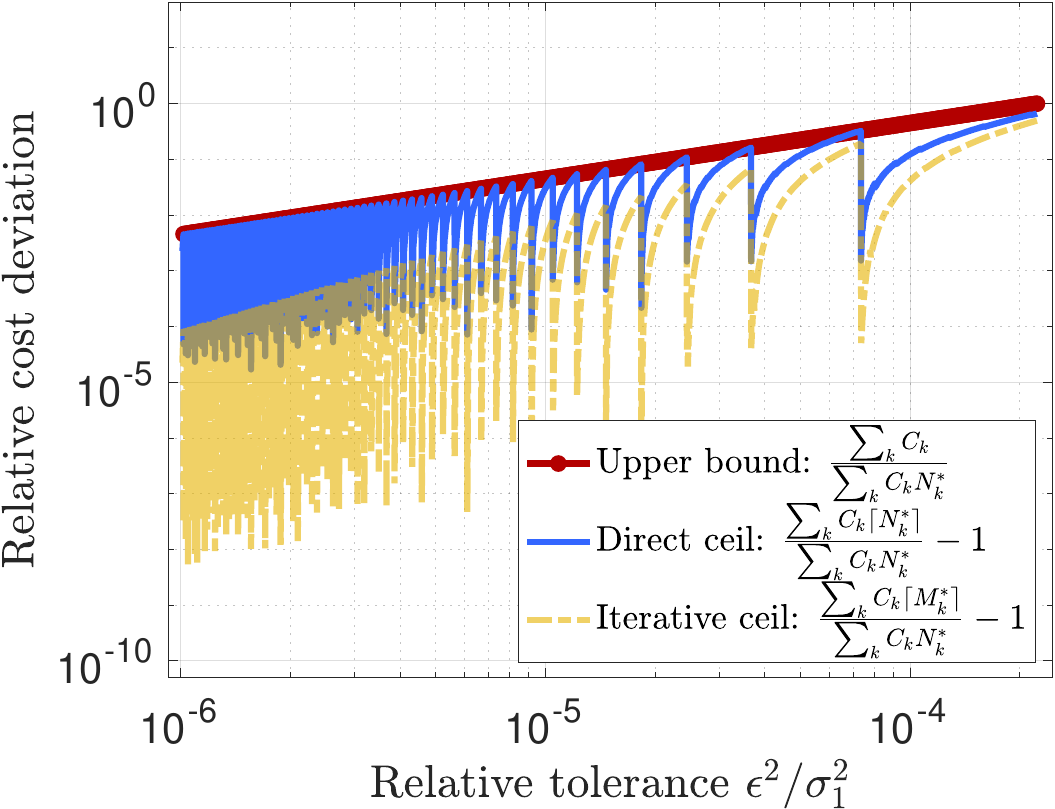} \\
\includegraphics[width=0.48\linewidth]{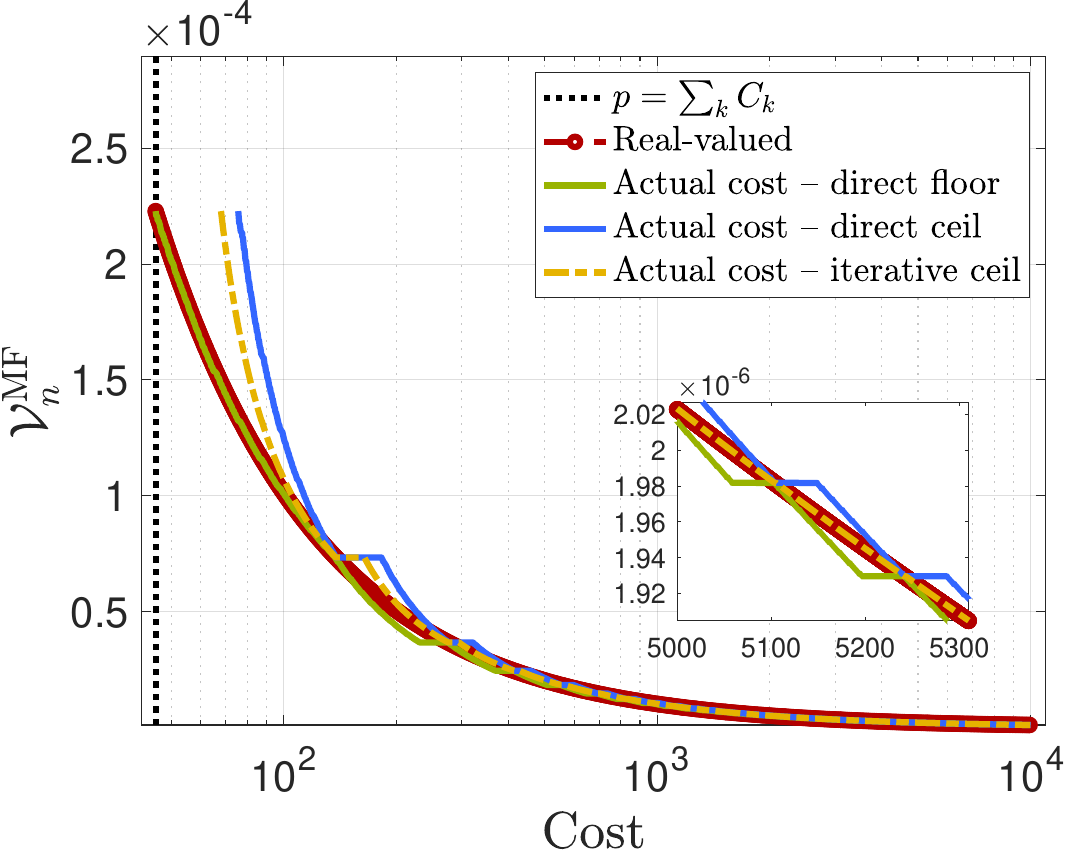} &
\includegraphics[width=0.48\linewidth]{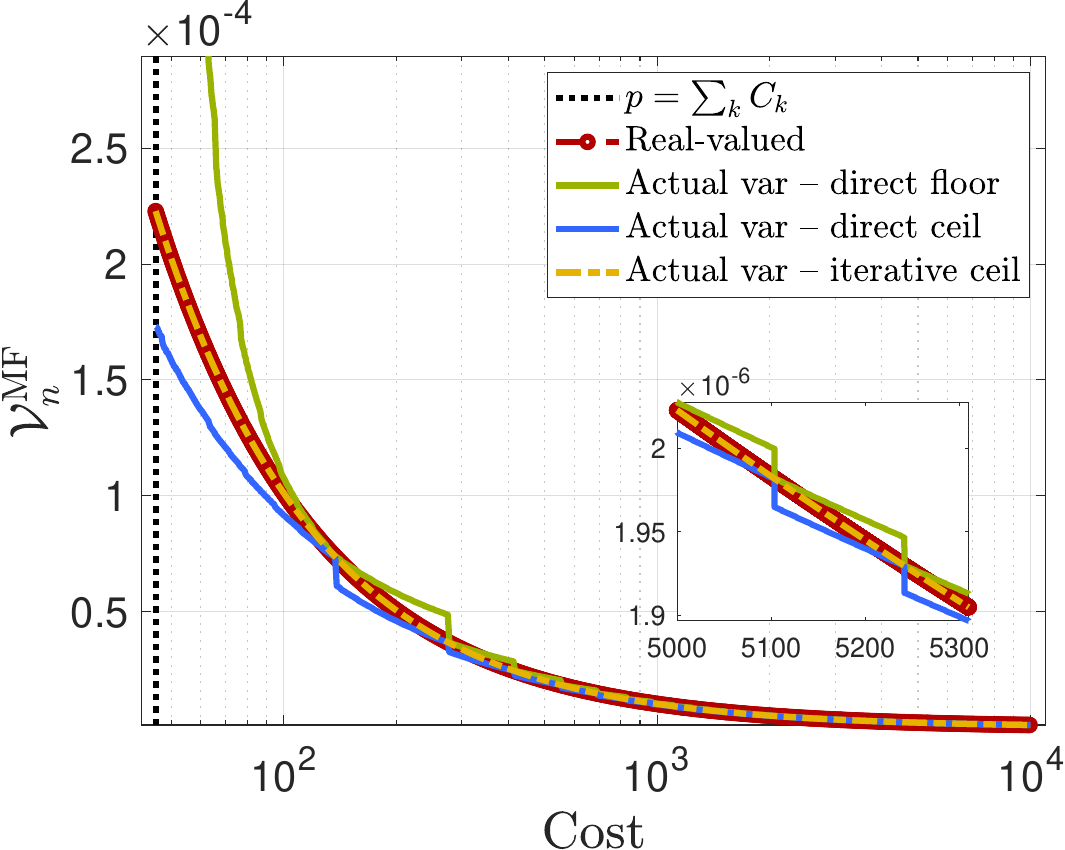}\\
\end{tabular}
\caption{Comparison of integer MFMC allocation strategies over a range of prescribed relative variance tolerances $\mathcal{V}_n^{\mathrm{MF}}\in[1.01\times 10^{-6},\,2.23\times 10^{-4}]$. Top left: Relative variance deviation from the prescribed tolerance. Top right: Relative cost increase with respect to the continuous MFMC optimum. Bottom left: Achieved variance as a function of computational cost. Bottom right: Computational cost as a function of achieved variance.}
\label{fig:Eg2} 
\end{figure}

\begin{table}[ht]
\centering
\scalebox{0.92}{
\begin{tabular}{cc} 

\begin{minipage}{0.25\linewidth}
\centering
\small
\begin{tabular}{|c|c|}
\hline
\textbf{Model} & \textbf{$\rho_{1,k}$} \\
\hline
1 & 1 \\
2 & 9.9970e-01 \\
3 & 9.4650e-01 \\
\hline
&$C_k$\\
\hline
1 & 1 \\
2 & 0.05 \\
3 & 0.001 \\
\hline
\end{tabular}
\end{minipage}
&
\begin{minipage}{0.73\linewidth}
\centering
\small
\begin{tabular}{|c|p{4.9cm}|c|c|}
\hline
\textbf{Method} & \textbf{Sample size} & \textbf{Cost} & \textbf{$\mathcal{V}^{\mathrm{MF}}_n$} \\ \hline

\multicolumn{4}{|c|}{\textbf{Tolerance: $\epsilon^2/\sigma_1^2 = 1.4519\times 10^{-2} $}} \\ \hline

Real-valued 
& [2.13e-01, 1.25e+01, 2.61e+02]
& 1.1000e+00 & 1.4519e-02 \\ \hline

Direct floor 
& [0, 12, 260]
& 8.6000e-01 & $\infty$ \\ \hline

Modified 
& [1, 1, 29]
& 1.0790e+00 & 1.3503e-01 \\ \hline

Direct ceil
& [1, 13, 261]
& 1.9110e+00 & 1.1997e-02 \\ \hline

Iterative ceil
& [1, 11, 199]
& 1.7490e+00 & 1.4514e-02 \\ \hline

\multicolumn{4}{|c|}{\textbf{Tolerance: $\epsilon^2/\sigma_1^2 = 5 \times 10^{-5}$}} \\ \hline

Real-valued 
& [6.19e+01,   3.64e+03,   7.57e+04]
& 3.1941e+02 & 5.0000e-05 \\ \hline

Direct floor 
& [61,        3637,       75650]
& 3.1850e+02 & 5.0145e-05\\ \hline

Direct ceil
& [62,        3638,       75651]
& 3.1955e+02 & 4.9978e-05\\ \hline

Iterative ceil
& [62,        3636,       75612]
& 3.1941e+02& 5.0000e-05\\ \hline

\end{tabular}
\end{minipage}

\end{tabular}
}
\caption{Model parameters and MFMC sample allocations for the Ishigami benchmark \cite{QiPeOMVe:2018}. Continuous and integer-valued MFMC allocations are compared for two prescribed relative variance tolerances, $\epsilon^2/\sigma_1^2 = 1.4519\times 10^{-2}$ and $5\times 10^{-5}$. All three candidate models are selected.}
\label{tab:MFMC_integer_comparison_Eg3}
\end{table}

\begin{table}[ht]
\centering
\scalebox{0.90}{
\begin{tabular}{cc} 

\begin{minipage}{0.27\linewidth}
\centering
\small
\begin{tabular}{|c|c|}
\hline
\textbf{Model} & \textbf{$\rho_{1,k}$} \\
\hline
1 & 1      \\
2 & 9.9838e-01 \\
3 & 9.9245e-01 \\
4 & 9.6560e-01 \\
5 & 7.0267e-01 \\
\hline
& $C_k$\\
\hline
1 & 1.000 \\
2 & 0.147 \\
3 & 0.026 \\
4 & 0.009 \\
5 & 0.002 \\
\hline
\end{tabular}
\end{minipage}
&
\begin{minipage}{0.73\linewidth}
\centering
\small
\begin{tabular}{|c|p{5.4cm}|c|c|}
\hline
\textbf{Method} & \textbf{Sample size} & \textbf{Cost} & \textbf{$\mathcal{V}^{\mathrm{MF}}_n$} \\ \hline

\multicolumn{4}{|c|}{\textbf{Tolerance: $\epsilon^2/\sigma_1^2 = 4.2991\times 10^{-2}$}} \\ \hline

Real-valued 
& [3.01e-01,   1.50e+00,   7.51e+00,   5.38e+01]
& 1.2000e+00 & 4.2991e-02
 \\ \hline

Direct floor 
& [1,     1,     1,     2]
& 1.1910e+00 & 5.3381e-01\\ \hline


Direct ceil
& [1,     2,     8,    54]
& 1.9880e+00 & 3.2978e-02\\ \hline


Iterative ceil
& [1,     2,     6,    38]
& 1.7920e+00 &4.2439e-02\\ \hline

\multicolumn{4}{|c|}{\textbf{Tolerance: $\epsilon^2/\sigma_1^2 = 5 \times 10^{-3}$}} \\ \hline

Real-valued 
& [2.58e+00,   1.29e+01,   6.46e+01,   4.62e+02]
& 1.0318e+01 &5.0000e-03 \\ \hline

Direct floor 
& [2,    12,    64,   462]
&9.5860e+00 &5.4421e-03\\ \hline


Direct ceil
& [3,    13,    65,   463]
& 1.0768e+01 & 4.8099e-03\\ \hline

Iterative ceil
& [3,    13,    61,   434]
& 1.0403e+01& 4.9975e-03\\ \hline

\end{tabular}
\end{minipage}

\end{tabular}
}
\caption{Model parameters and MFMC sample allocations for the heterogeneous
elasticity example \cite{GoGeElJa:2020}. Continuous and integer-valued MFMC allocations are compared for two prescribed relative variance tolerances, $\epsilon^2/\sigma_1^2 = 4.2991\times 10^{-2}$ and $5\times 10^{-3}$. The first four candidate models are selected.}
\label{tab:MFMC_integer_comparison_Eg4}
\end{table}

Tables~\ref{tab:MFMC_integer_comparison_Eg2}--\ref{tab:MFMC_integer_comparison_Eg4}, together with Figure~\ref{fig:Eg2}, confirm that the trends observed for the plasma equilibrium problem are consistent across the remaining benchmark problems. Despite the substantial differences in model costs, correlation structures, and numbers of fidelity levels, the proposed recursive allocation consistently provides the closest integer approximation to the continuous MFMC optimum.

Across all test cases, direct flooring and direct ceiling exhibit complementary shortcomings. Direct flooring may become infeasible or, when feasible, fail to make full use of the available computational budget. Direct ceiling, on the other hand, guaranties feasibility by allocating more samples than necessary, thus incurring unnecessary computational cost. By incorporating rounding decisions into the recursive residual-variance update, the proposed strategy preserves the robustness of ceiling rounding while substantially reducing its conservatism, yielding allocations that satisfy the prescribed variance tolerance with lower computational cost.

The improvement is most pronounced in the pre-asymptotic regime, where integer rounding has the greatest influence on the optimal allocation. As the prescribed tolerance decreases and the continuous optimal sample sizes become larger, the relative impact of rounding diminishes, so the differences among feasible integer-valued strategies become smaller. Even in this regime, however, the recursive allocation remains uniformly closer to the continuous optimum than direct ceiling rounding.

\section{Conclusion}
This paper presents a recursive framework for integer-valued sample allocation in multifidelity Monte Carlo methods. By reformulating the continuous MFMC allocation problem as a variance-constrained cost minimization problem, we reveal a sequential structure that enables a recursive allocation strategy based on Bellman's principle of optimality. The proposed method incorporates rounding directly into the recursive update of the residual variance budget, yielding integer-valued sample allocations that satisfy the prescribed variance tolerance at lower computational cost than direct ceiling rounding. Theoretical analysis and numerical experiments demonstrate that the proposed strategy remains significantly closer to the continuous optimum, particularly in the pre-asymptotic regime where rounding effects are most pronounced. The same recursive construction also extends naturally to multilevel Monte Carlo methods.

\section{Appendix}\label{sec:Appendix}

\subsection{Proof of Theorem \ref{thm:Sample_size_est}}

\begin{proof}

The proof proceeds in two steps. We first characterize the optimal solution for an arbitrary block partition of the sample allocation via the KKT conditions. We then compare all admissible block partitions and show that, under assumption~(ii), the singleton partition is globally optimal.

To describe all feasible sample allocations satisfying $N_1\le N_2\le\cdots\le N_K$, we group consecutive models with identical sample sizes into contiguous
blocks. Let
$P=(B_1,\ldots,B_q)$ be a partition of the fidelity indices
$\{1,\ldots,K\}$ into $q$ contiguous blocks, where
\[
B_i=\{\ell_i,\ldots,\ell_{i+1}-1\},
\qquad
1=\ell_1<\cdots<\ell_q<\ell_{q+1}=K+1.
\]
Each block $B_i$ consists of consecutive fidelity indices with a common sample size, and the sample sizes increase strictly across consecutive blocks
\begin{equation}
\label{eq:sample_size_block}
N_{\ell_i}
=
N_{\ell_i+1}
=
\cdots
=
N_{\ell_{i+1}-1},
\qquad
N_{\ell_i}<N_{\ell_{i+1}},
\qquad
i=1,\ldots,q-1.
\end{equation}

For a fixed partition $P$, the associated optimization problem is solved via the KKT conditions. Introducing the Lagrange multiplier $\lambda_0$ for the variance constraint and multipliers $\lambda_{\ell_i}$ associated with the monotonicity constraints, the Lagrangian is
%
\begin{equation*}
\mathcal{L}= \sum_{i=1}^q N_{\ell_i}\sum_{k\in B_i} C_k +\lambda_0 \left(\frac{\sigma_1^2}{N_{\ell_1}} + \sum_{i=2}^q \left(\frac{1}{N_{\ell_i-1}} - \frac{1}{N_{\ell_i}}\right)G_{\ell_i}- \epsilon^2\right)-\lambda_{\ell_1} N_{\ell_1}+\sum_{i=2}^q\lambda_{\ell_{i}}(N_{\ell_{i-1}} - N_{\ell_{i}}),
\end{equation*}
where $G_k=\alpha_k^2\sigma_k^2-2\alpha_k\rho_{1,k}\sigma_1\sigma_k$. The KKT conditions consist of
\[
\begin{array}{lll}
\left[\text{Stationarity}\right]&\partial \mathcal{L}/\partial \alpha_{\ell_i}=0,&\partial \mathcal{L}/{\partial N_{\ell_i}}=0,\quad i=1\ldots,q,\\
\left[\text{Primal feasibility}\right]&\mathcal{V}^{\text{MF}}- \epsilon^2 = 0, \\ 
\left[\text{Primal feasibility}\right] &-N_{\ell_1}\le 0,&N_{\ell_{i-1}}-N_{\ell_i} \le 0, \quad i=2\ldots,q,\\ 
\left[\text{Dual feasibility}\right]  &\lambda_{\ell_i} \ge 0,\quad i=1\ldots,q, \\ 
\left[\text{Complementary slackness}\right]  &\lambda_{\ell_1} N_{\ell_1}=0,&\lambda_{\ell_i}(N_{\ell_{i-1}}-N_{\ell_i})=0,\quad i=2\ldots,q.
\end{array}
\]
Since the variables $\alpha_k$ appear only through the quadratic terms $G_k$ in
\eqref{eq:MFMC_variance}, the optimal coefficients can be determined independently of the sample allocation. The stationarity condition gives
\[
\alpha_{\ell_i}^*
= \frac{\rho_{1,\ell_i}\sigma_1}{\sigma_{\ell_i}},
\qquad i=1,\ldots,q,
\]
with $\alpha_{\ell_1}^* = 1$. 
Only the block-start indices
$k=\ell_i$
appear in the MFMC estimator; all correction terms associated with
$k=\ell_i+1,\ldots,\ell_{i+1}-1$
vanish because the corresponding sample sizes are identical. Using $N_{\ell_i-1}=N_{\ell_{i-1}}$ for $i\ge2$ from \eqref{eq:sample_size_block} and substituting the optimal coefficients $\alpha_{\ell_i}^*$ into the variance in \eqref{eq:MFMC_variance} gives
\begin{equation}\label{eq:MFMC_var_convex}
    \mathcal{V}^{\mathrm{MF}} = \frac{\sigma_1^2}{N_1}+\sum_{i=2}^q
\left(\frac{1}{N_{\ell_{i}}}-\frac{1}{N_{\ell_{i}-1}}\right)\rho_{1,\ell_i}^2\sigma_1^2
=\sigma_1^2\sum_{i=1}^{q} \frac{\Delta_{\ell_i}}{N_{\ell_i}},
\end{equation}
where $\Delta_{\ell_i} = \rho_{1,\ell_i}^2-\rho_{1,\ell_{i+1}}^2$ for $i = 1, \dots, q$ with $\rho_{1,\ell_{q+1}} = \rho_{1,K+1} = 0$. The optimization problem \eqref{eq:Optimization_pb_sample_size} is then reduced to a problem involving only sample allocation with the block start indices
\begin{equation}\label{eq:Optimization_pb_sample_size_reduced}
    \begin{array}{ll}
    \displaystyle\min_{(N_{\ell_1},\ldots, N_{\ell_q}) \in \mathbb{R}^{q}} &\displaystyle \sum_{i=1}^q N_{\ell_i} \sum_{k\in B_i} C_k,\\
       \text{subject to} &\displaystyle \sum_{i=1}^q\frac{\Delta_{\ell_i}}{N_{\ell_i}}=\frac{\epsilon^2}{\sigma_1^2},\\[2pt]
       &\displaystyle N_{\ell_1}\ge 0,\quad \displaystyle N_{\ell_i-1}\le N_{\ell_i}, \;\; i=2\ldots,q.
    \end{array}
\end{equation}
Introducing the change of variables
\[
y_{\ell_i}=1/N_{\ell_i},
\]
transforms \eqref{eq:Optimization_pb_sample_size_reduced} into a convex optimization problem \cite{AgVeDiBo:2018}.


Stationarity of the Lagrangian with respect to $N_{\ell_i}$ yields
\[
\frac{\partial \mathcal{L}}{\partial N_{\ell_i}} =\sum_{k\in B_i}C_{k} -  \lambda_0\frac{\sigma_1^2\Delta_{\ell_i}}{N_{\ell_i}^2}-\lambda_{\ell_{i}}+\lambda_{\ell_{i+1}}=0,\quad i = 1, \ldots,q-1.
\]

Since the sample sizes are strictly increasing between consecutive blocks, complementary slackness implies $\lambda_{\ell_i} = 0$ for $i=1,\ldots, q$. The sample sizes are then
\begin{equation}\label{eq:sample_size_1}
    N_{\ell_i} = \sigma_1\sqrt{\lambda_0} \sqrt{\frac{\Delta_{\ell_i}}{\sum_{k\in B_i} C_{k}}}, \;\text{ for }\; i=1,\ldots,q,
\end{equation}
%
Substituting \eqref{eq:sample_size_1} into the variance constraint $\mathbb{V}[A^{\text{MF}}] = \epsilon^2$ yields
\[
\sqrt{\lambda_0}=\frac{\sigma_1}{\epsilon^2} \sum_{i=1}^{q} \sqrt{\Delta_{\ell_i}\sum_{k\in B_i} C_{k}}.
\]
Hence,
\[
N_{\ell_i}^* = \frac{\sigma_1^2}{\epsilon^2}\sqrt{\frac{\Delta_{\ell_i}}{\sum_{k\in B_i} C_{k}}}  \sum_{j=1}^{q} \sqrt{\Delta_{\ell_j}\sum_{k\in B_j} C_{k}} \;\text{ for }\; i=1,\ldots,q.
\]

Since the transformed optimization problem is convex, the KKT conditions are
necessary and sufficient for optimality. Therefore, the sample allocation derived above is the unique optimal allocation corresponding to the partition $P$.

Let $\mathscr P_K$ denote the collection of all contiguous block partitions
of $\{1,\ldots,K\}$. For each partition
$P\in\mathscr P_K$, let
$\mathcal W_P^{\mathrm{MF}}$
denote the minimum sampling cost obtained under the partition $P$. Evaluating the objective at the optimal block allocation gives
\begin{equation}
\label{eq:block_cost}
\mathcal{W}_{P}^{\text{MF}} = \sum_{i=1}^q \sum_{k\in B_i} C_k N_k = \sum_{i=1}^q N_{\ell_i}\sum_{k\in B_i} C_k =\frac{\sigma_1^2}{\epsilon^2}\left(\sum_{i=1}^{q} \sqrt{\Delta_{\ell_i}\sum_{k\in B_i} C_{k}}\right)^2.
\end{equation}

We now compare these optimal costs over all different partitions. For an arbitrary partition $P$,
the summation can be regrouped according to the blocks because the blocks
form a partition of the index set
\[
\sum_{k=1}^{K}\sqrt{\Delta_kC_k}
=
\sum_{i=1}^{q}
\sum_{k\in B_i}
\sqrt{\Delta_kC_k}.
\]

Applying the Cauchy--Schwarz inequality within each block yields
\begin{equation}
    \label{eq:Global_Optimality}
    \sum_{i=1}^{q}
\sum_{k\in B_i}
\sqrt{\Delta_kC_k}
\le
\sum_{i=1}^{q}
\sqrt{
\left(\sum_{k\in B_i}\Delta_k\right)
\left(\sum_{k\in B_i}C_k\right)
}=
\sum_{i=1}^{q}
\sqrt{
\Delta_{\ell_i}
\sum_{k\in B_i}C_k
},
\end{equation}
where the last equality follows from 
$
\sum_{k\in B_i}\Delta_k
=
\rho_{1,\ell_i}^2-\rho_{1,\ell_{i+1}}^2
=
\Delta_{\ell_i}
$. 

The equality in \eqref{eq:Global_Optimality} holds if and only if either every block contains exactly one model, or $\Delta_k/C_k$ is constant within each block. The assumption~(ii) of the theorem excludes the latter possibility for every block containing more
than one model. Therefore, every non-singleton partition has a strictly larger optimal
sampling cost than the singleton partition. Let
\[
P_{\rm s}
=
(\{1\},\ldots,\{K\})
\]
be the singleton partition. Then
\[
\mathcal W_{P_{\rm s}}^{\mathrm{MF}}
<
\mathcal W_{P}^{\mathrm{MF}},
\qquad
\forall P\neq P_{\rm s},\;\;P \in\mathscr P_K.
\]
The above argument also admits a natural interpretation from the
estimator viewpoint. If $\Delta_{k-1}/C_{k-1}=\Delta_k/C_k$ within a block, then the optimal allocation satisfies $N_{k-1}=N_k$. Consequently, $
\overline A_{k,N_k}-\overline A_{k,N_{k-1}}=0,
$ so the corresponding correction term vanishes identically. Such a model therefore incurs computational cost without contributing any additional variance reduction.

For the singleton partition, the optimal block allocation reduces to
\[
N_k^*
=
\frac{\sigma_1^2}{\epsilon^2}
\sqrt{\frac{\Delta_k}{C_k}}
\sum_{j=1}^{K}\sqrt{\Delta_j C_j},
\qquad k=1,\ldots,K.
\]
Moreover, assumption~(ii) implies

\[
\frac{N_k^*}{N_{k-1}^*}
=
\sqrt{
\frac{\Delta_k/C_k}
{\Delta_{k-1}/C_{k-1}}
}
>1 ,
\]
which satisfies the monotonicity constraints. This completes the proof.


\end{proof}

\subsection{Proof of Theorem \ref{thm:MFMC_Iterative_RealValued_Sample_Size}}

\begin{proof}

Using the residual variance recursion \eqref{eq:R_k_def} together with
\eqref{eq:MFMC_Recursive_RealValued_Sample_Size},
we obtain
\begin{equation*}\label{eq:R_k_S_k}
    R_i
    = R_{i-1} - \frac{\Delta_{i-1}}{N_{i-1}^{*,(i-1)}}
    = R_{i-1}
      \left(1 - \frac{\sqrt{C_{i-1}\Delta_{i-1}}}{S_{i-1}}\right)
    = R_{i-1}\frac{S_i}{S_{i-1}},
\end{equation*}
where the last equality follows from the recursion $S_{i-1}=\sqrt{C_{i-1}\Delta_{i-1}}+S_i$ in \eqref{eq:S_i}. This immediately yields the invariance \eqref{eq:R_k_S_k_1}. Consequently, the scaling factor $S_i/R_i$ is invariant across subproblems. Hence, for any admissible starting indices $i,j\leq k$, 
\[
N_k^{*,(i)}
=
\sqrt{\frac{\Delta_k}{C_k}}\frac{S_i}{R_i}
=
\sqrt{\frac{\Delta_k}{C_k}}\frac{S_j}{R_j}
=
N_k^{*,(j)},
\]
which establishes the cross-subproblem consistency. We may therefore omit the superscript $(i)$ in \eqref{eq:MFMC_Recursive_RealValued_Sample_Size} and denote the common value by
\begin{equation}
    \label{eq:MFMC_Recursice_RealValued_Sample_Size_1}
    N_k^* = \sqrt{\frac{\Delta_k}{C_k}}\frac{S_k}{R_k}.
\end{equation}

Finally, substituting the identity
$R_i = \frac{\epsilon^2}{\sigma_1^2 S_1} S_i$ into
\eqref{eq:MFMC_Recursice_RealValued_Sample_Size_1} gives
\[
    N_k^*
    = \frac{\sigma_1^2 S_1}{\epsilon^2}
      \sqrt{\frac{\Delta_k}{C_k}},
\]
which is precisely the closed-form MFMC allocation \eqref{eq:MFMC_RealValued_Sample_Size}.
\end{proof}

\subsection{Proof of Theorem \ref{thm:MFMC_New_IntegerValued_Variance_Cost}}

\begin{proof}

We first establish that $M_k^*\le N_k^*$ for all $k=1,\ldots,K$ by induction. For $k=1$, the recursive scheme gives
$M_1^*=N_1^*$, so the claim holds trivially. For $k\ge2$, substituting the explicit expressions for
$M_k^*$ and $N_k^*$ shows that $M_k^*\le N_k^*$ is equivalent to
\[
\sum_{j=1}^{k-1}\frac{\Delta_j}{\lceil M_j^*\rceil}
\le \frac{\epsilon^2}{\sigma_1^2}\left(1-\frac{S_k}{S_1}\right),\qquad k=2,\dots,K,
\]
where $S_k$ is defined in \eqref{eq:S_i}. It therefore suffices to prove
\[
T_{m}:=\sum_{j=1}^{m}\frac{\Delta_j}{\lceil M_j^*\rceil} \le \frac{\epsilon^2}{\sigma_1^2}\left(1-\frac{S_{m+1}}{S_1}\right),\qquad m=1,\dots,K-1,
\]
which we prove by induction on $m$. Since  \(M_1^*=\frac{\sigma_1^2}{\epsilon^2}\sqrt{\frac{\Delta_1}{C_1}}S_1\) and \(M_1^* \le \lceil M_1^*\rceil\), for the base case \(m=1\),
\[
T_1=\frac{\Delta_1}{\lceil M_1^*\rceil}
\le \frac{\Delta_1}{M_1^*}=\frac{\epsilon^2}{\sigma_1^2}\left(1-\frac{S_{2}}{S_1}\right).
\]
Suppose that, for some $m-1$ with $2\le m\le K-1$,
\[
T_{m-1}\le \frac{\epsilon^2}{\sigma_1^2}\left(1-\frac{S_{m}}{S_1}\right).
\]
Since $M_m^*=\sqrt{\frac{\Delta_m}{C_m}}{S_m}/{(\frac{\epsilon^2}{\sigma_1^2}-T_{m-1})}$, it follows that

\[
T_m = T_{m-1}+\frac{\Delta_m}{\lceil M_m^*\rceil}\le \frac{\epsilon^2}{\sigma_1^2}\left(1-\frac{S_{m}}{S_1}\right) + \frac{\Delta_m}{M_m^*}=\frac{\epsilon^2}{\sigma_1^2S_m}\left(S_{m+1}+\sqrt{C_m\Delta_m}-\frac{S_mS_{m+1}}{S_1}\right)=\frac{\epsilon^2}{\sigma_1^2}\left(1-\frac{S_{m+1}}{S_1}\right).
\]
where the inequality follows from the induction hypothesis together with $\lceil M_m^*\rceil\ge M_m^*$. This completes the induction and proves
\[
M_k^*\le N_k^*\qquad \text{for all } k=1,\dots,K.
\]
Monotonicity of the ceiling implies 
\begin{equation}\label{eq: Monotonicity_of_M_k_N_k}
    \lceil M_k^* \rceil \le \lceil N_k^* \rceil, \quad \text{ for all}\; k.
\end{equation}
Moreover, $\lceil M_k^* \rceil=\lceil N_k^* \rceil$ if and only if $\lceil N_k^*\rceil-1 < M_k^* \le N_k^*$ for all $k$.

The lower bound in
\eqref{eq:Iterative_integer_sample_size_variance_bound}
follows immediately from
\eqref{eq: Monotonicity_of_M_k_N_k},
since
$x\mapsto\Delta_k/x$
is strictly decreasing on $(0,\infty)$.
For the upper variance bound of \eqref{eq:Iterative_integer_sample_size_variance_bound}, observe that the recursive construction \eqref{eq:MFMC_New_IntegerValued_Sample_Size} satisfies
\[
\frac{\Delta_K}{M_K^*}
=
\frac{\epsilon^2}{\sigma_1^2}
-
\sum_{j=1}^{K-1}
\frac{\Delta_j}{\left\lceil M_j^* \right\rceil}.
\]
Since $\lceil M_K^* \rceil \ge M_K^*$, it follows that
\[
\sum_{k=1}^K
\frac{\Delta_k}{\left\lceil M_k^* \right\rceil}
\le
\frac{\epsilon^2}{\sigma_1^2},
\]
Equality holds if and only if $\lceil M_k^* \rceil = N_k^*$ for all $k$. Since
\(
\lceil M_k^*\rceil=N_k^*
\)
and
\(N_k^*\)
is integer-valued, substituting this relation into
\eqref{eq:MFMC_New_IntegerValued_Sample_Size}
shows that \( M_k^*=N_k^* \) for all \(k\).

The upper cost bound in \eqref{eq:Iterative_integer_sample_size_cost_bound}
follows directly from \eqref{eq: Monotonicity_of_M_k_N_k}. For the lower cost bound, applying the Cauchy--Schwarz inequality together with the upper variance bound in
\eqref{eq:Iterative_integer_sample_size_variance_bound}
yields
\[
\left(\sum_{k=1}^K \sqrt{C_k \Delta_k}\right)^2
\le
\left(\sum_{k=1}^K C_k \left\lceil M_k^* \right\rceil\right)
\left(\sum_{k=1}^K \frac{\Delta_k}{\left\lceil M_k^* \right\rceil}\right)
\le 
\frac{\epsilon^2}{\sigma_1^2}\left(\sum_{k=1}^K C_k \left\lceil M_k^* \right\rceil\right),
\]
which proves the desired result. Equality holds if and only if there exists a constant $\lambda>0$ such that
\[
\sqrt{C_k \left\lceil M_k^*\right\rceil}=\lambda\sqrt{\frac{\Delta_k}{\left\lceil M_k^*\right\rceil}},
\quad
k=1,\ldots,K,
\qquad \text{and } \quad \sum_{k=1}^K \frac{\Delta_k}{\left\lceil M_k^*\right\rceil}=\frac{\epsilon^2}{\sigma_1^2}.
\]
These conditions imply
\[
\left\lceil M_k^*\right\rceil 
= \lambda\sqrt{\frac{\Delta_k}{C_k}}
= \frac{\sigma_1^2}{\epsilon^2}\sum_{k=1}^K \sqrt{C_k\Delta_k}\sqrt{\frac{\Delta_k}{C_k}}
=N_k^*,
\]
and therefore require the continuous MFMC optimum $N_k^*$ to be integer-valued.
Together with the equality condition for the upper variance bound in
\eqref{eq:Iterative_integer_sample_size_variance_bound},
this further implies $M_k^*=\lceil M_k^*\rceil=N_k^*$ for all $k$.

\end{proof}

\bibliographystyle{abbrv}
\bibliography{references_liang}
\end{document}